\definecolor{lanse}{RGB}{0,112,192}
\definecolor{zise}{RGB}{112,48,160} 
 \definecolor{hongse}{RGB}{200,0,0} 
\renewenvironment{proof}[1][\proofname]{%
   \par\pushQED{\qed}\normalfont%
   \topsep6\p@\@plus6\p@\relax
   \trivlist\item[\hskip\labelsep\bfseries#1\@addpunct{.}]%
   \ignorespaces
}{%
   \popQED\endtrivlist\@endpefalse
}
\numberwithin{equation}{section}
\newtheorem{theorem}{Theorem}
\newtheorem{proposition}[theorem]{Proposition}
 \numberwithin{theorem}{section}
\newtheorem{lemma}[theorem]{Lemma}
\newtheorem{mydef}[theorem]{Definition}
\newtheorem{remark}[theorem]{Remark}
\def\keywords{\xdef\@thefnmark{}\@footnotetext}
\renewcommand{\P}{\mathbb{P}}
\newcommand{\E}{\mathbb{E}}
\newcommand{\R}{\mathbb{R}}
\newcommand{\Z}{\mathbb{Z}}
\newcommand{\N}{\mathbb{N}}
\newcommand{\cA}{\mathcal A}
\newcommand{\cI}{\mathcal I}
\newcommand{\cC}{\mathcal C}
\newcommand{\cN}{\mathcal{N}}
\newcommand{\cH}{\mathcal{H}}
\newcommand{\cF}{\mathcal F}
\newcommand{\cU}{\mathcal U}
\newcommand{\cG}{\mathcal G}
\newcommand{\eps}{\varepsilon}
 \newcommand{\nn}{\nonumber}
 \newcommand{\no}{\noindent}
\begin{document}
\keywords{\today}%
\keywords{AMS 2020 \emph{subject classification.} Primary: 60K35, 60J80, 60J68}%
\keywords{\emph{Key words and phrases.} Bernoulli bond percolation, critical probability, SIR epidemic, branching random walk}%

\author{
Jieliang Hong 
}
\title{A lower bound for $p_c$ in range-$R$ bond percolation in four, five and six dimensions}

\date{{\small  {\it  Department of Mathematics, Southern University of Science and Technology,\\
 Shenzhen, China\\
 E-mail:  {\tt hongjl@sustech.edu.cn} \\
   }
  }
  }

\maketitle
\begin{abstract}
For the range-$R$ bond percolation in $d=4,5,6$, we obtain a lower bound for the critical probability $p_c$ for $R$ large, agreeing with the conjectured asymptotics and thus complementing the corresponding results of Van der Hofstad-Sakai \cite{HS05} for $d>6$, and Frei-Perkins \cite{FP16}, Hong \cite{Hong21} for $d\leq 3$. The proof follows by showing the extinction of the associated SIR epidemic model and introducing a self-avoiding branching random walk where births onto visited sites are suppressed and the total range of which dominates that of the SIR epidemic process. 
\end{abstract}
\section{Introduction and the main result} \label{4s1}

Set $R\in \N$. The range-$R$ bond percolation takes place on the scaled integer lattice $\Z_R^d=\Z^d/R=\{x/R: x\in \Z^d\}$, which is equivalent to the Bernoulli bond percolation on $\Z^d$ where bonds are allowed to form over a long range when $R$ is large. Such range-$R$ bond percolation dates back at least to the ``spread-out'' model considered in Hara and Slade \cite{HS90}. It can be used to model the spread of the disease in a large population when the range of infection can be very long, in particular, due to the increased interactions and the more frequent communications within the population. Let $x, y\in \Z^d_R$ be neighbours if $0<\|x-y\|_\infty\leq 1$ where $\|\cdot \|_\infty$ denotes the $l^\infty$ norm on $\R^d$, and write $x\sim y$ if $x,y$ are neighbours. Let $\cN(x)$ be the set of neighbours of $x$ and denote its size by 
\[V(R):=|\cN(x)|=|\{y\in \Z^d_R: 0<\|y-x\|_\infty\leq 1\}|=(2R+1)^d-1.\] Here $|S|$ stands for the cardinality of a finite set $S$. Now as usual in Bernoulli bond percolation, we include the edge $(x,y)$ for any two neighbors $x\sim y$ with some probability $p>0$, independently of all other edges. Denote by $G=G_R$ the resulting subgraph with vertex set $\Z_R^d$ and edge set is the set of open edges. Define $\cC(0)$ to be the cluster in $G$ containing $0$. The critical probability $p_c$ is then given by
\[
p_c=p_c(R)=\inf\{p: \P_p(|\cC_0|=\infty)>0\}.
\]
We are interested in finding the asymptotic behavior of $p_c(R)$ as $R\to \infty$. \\

Write $f(R)\sim g(R)$ as $R\to \infty$ if $\lim_{R\to \infty} f(R)/g(R)=1$.  It was first shown in Penrose \cite{Pen93} that
\begin{align}\label{4e1.449}
p_c(R) \sim \frac{1}{V(R)} \text{ as } R\to \infty, \text{ in } d\geq 2,
\end{align}
which is analogous to the results of Kesten \cite{Kes90} for the nearest neighbour percolation on $\Z^d$ when $d\to \infty$. Later in higher dimensions $d> 6$, Van der Hofstad and Sakai \cite{HS05} use lace expansion to get finer asymptotics on $p_c(R)$:
\begin{align}\label{4e10.10}
p_c(R)V(R)-1 \sim \frac{\theta_d}{R^d},
\end{align}
where $\theta_d$ is given in terms of a probability concerning random walk with uniform steps on $[-1,1]^d$. See \eqref{4c10.64} below for the explicit expression of $\theta_d$.
The extension of \eqref{4e10.10} to $d=6,5$ has been conjectured by the two authors in \cite{HS05} while in dimension $d=4$, it has been conjectured by Edwin Perkins and Xinghua Zheng [private communication] that
\begin{align}\label{4ec10.10}
p_c(R)V(R)-1 \sim \frac{\theta_4 \log R}{R^4} \text{ in } d=4.
\end{align}
They also conjecture the constant $\theta_4$ to be $9/(2\pi^2)$, agreeing with our result below. In lower dimensions $d=2,3$, Frei-Perkins \cite{FP16} and Hong \cite{Hong21} give respectively the lower and upper bound for $p_c$ that suggests the correct asymptotics for $p_c(R)V(R)-1$ should be 
\begin{align}\label{4e10.11}
p_c(R)V(R)-1\sim \frac{\theta_d}{R^{d-1}} \text{ in } d=3,2
\end{align}
for some constant $\theta_d>0$ that depends on the dimension. They use in particular the ideas from branching random walk (BRW) or superprocess to study the range-$R$ bond percolation. Moreover, the fact that the local time of super-Brownian motion exists when $d\leq 3$ is essential in \cite{Hong21}, allowing the author to apply the theory of superprocess to study the asymptotics of $p_c(R)$. However, the local time does not exist in $d\geq 4$, so new tools will be needed. \\
 
In this paper, we adapt the methods from Frei-Perkins \cite{FP16} for the SIR epidemic process and the ideas from Durrett-Perkins \cite{DP99} for the contact process to study percolation in the intermediate dimensions $4\leq d\leq 6$, and obtain the lower bound for $p_c(R)$ that matches the conjectured asymptotics as in \eqref{4e10.10} for $d=5,6$ and \eqref{4ec10.10} for $d=4$.

Let $Y_1,Y_2, \cdots$ be i.i.d. random variables on $\R^d$ so that $Y_1$ is uniform on $[-1,1]^d$. Set $U_n=Y_1+\cdots+Y_n$ for $n\geq 1$. When $d\geq 5$, define
\begin{align}\label{9ec10.64}
b_d=2^{-d}  \sum_{n=1}^\infty \sum_{k=n}^{2n}   \P(U_{k}\in [-1,1]^d)=2^{-d}  \sum_{k=1}^\infty \Big[\frac{k+2}{2}\Big]  \P(U_{k}\in [-1,1]^d),
\end{align}
where $[x]$ is the largest integer that is less than or equal to $x$. In $d=4$, we let $b_4={9}/(2\pi^2)$. \\

\begin{theorem}\label{4t0}
Let $4\leq d\leq 6$. For any $\theta<b_d$, there exists some constant $c(\theta)>0$ so that for any positive integer $R>c(\theta)$, we have
\begin{align} 
p_c(R)V(R)-1\geq 
\begin{dcases}
 \frac{\theta \log R}{R^4}, &\quad \text{ in } d=4;\\
  \frac{\theta}{R^{d}}, &\quad \text{ in } d=5,6.
\end{dcases}
\end{align}
In particular, the above implies
 \begin{align} \label{4c1.0}
\liminf_{R\to \infty} \frac{\big[p_c(R)V(R)-1\big]R^4}{\log R} \geq b_4, \quad \text{ in } d=4,
\end{align}
and
 \begin{align} \label{4c1.1}
\liminf_{R\to \infty} \big[p_c(R)V(R)-1\big] R^d  \geq b_d, \quad \text{ in } d=5,6.
\end{align}
\end{theorem}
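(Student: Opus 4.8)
The plan is to recast the event $\{|\cC(0)|=\infty\}$ as survival of an epidemic and then to bound that epidemic by a branching model whose self-interaction can be computed to leading order. First I would explore $\cC(0)$ by revealing open edges generation by generation; this realizes $\cC(0)$ as the set of ever-infected sites of an SIR epidemic on $\Z_R^d$ in which each infected site, before recovering, infects every susceptible neighbour independently with probability $p$. Hence $\{|\cC(0)|=\infty\}$ is exactly the survival event of the epidemic, and to prove $p<p_c(R)$ it suffices to show that the epidemic goes extinct — that its range is a.s. finite — whenever $pV(R)-1$ lies below the stated threshold.

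The second step is the comparison announced in the abstract. On a common probability space I would build a self-avoiding branching random walk in which each particle, before dying, attempts a birth at each neighbouring site independently with probability $p$, and every birth whose target has already been occupied at an earlier time is suppressed. Coupling the birth attempts with the epidemic's edge variables and arguing by induction on generations, one verifies that the set of sites ever visited by the SIR epidemic is contained in the range of this self-avoiding BRW. It therefore suffices to prove that the self-avoiding BRW has a.s. finite range.

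To analyze the self-avoiding BRW I would compare it with the non-interacting BRW whose offspring mean is $pV(R)=1+\delta$. That process is barely supercritical, so the entire effect must come from the suppressed births, and the crux is to evaluate their expected number to leading order. Conditioning on the genealogy, a suppressed birth arising from two lineages that split $a$ and $b$ generations ago corresponds to the difference of two range-$R$ walks returning to a common box after $a+b$ steps; after rescaling a single step to $Y_1$ uniform on $[-1,1]^d$ this probability is $\approx V(R)^{-1}\,\P(U_{a+b}\in[-1,1]^d)$. Collecting the genealogical labellings by their total age $k=a+b$ — which, through the reparametrization $\sum_{n\ge1}\sum_{k=n}^{2n}=\sum_{k\ge1}[\tfrac{k+2}{2}]$, reproduces exactly the multiplicity in \eqref{9ec10.64} — the expected number of suppressed births per particle becomes $V(R)^{-1}\sum_{k}[\tfrac{k+2}{2}]\,\P(U_k\in[-1,1]^d)=2^d b_d/V(R)\approx b_d/R^d$ in $d=5,6$; in $d=4$ the same series is only marginally divergent, and cutting it off at the diffusive scale $k\asymp R^2$ converts it into the rate $b_4\log R/R^4$ with $b_4=9/(2\pi^2)$. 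A moment and supermartingale argument then shows that as soon as $\delta=pV(R)-1<\theta\,(\text{rate})$ with $\theta<b_d$, the renormalised branching rate drops strictly below $1$, forcing the self-avoiding BRW to die out with finite range; propagating this back through the two couplings yields $p<p_c(R)$ for every $R>c(\theta)$, and hence both $\liminf$ assertions.

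The hard part will be the suppression estimate with its sharp constant. One must control the local density of already-visited sites encountered by a fresh birth finely enough to isolate $b_d$, establishing matching upper and lower bounds on the collision probabilities that are uniform in $R$ while all remainder terms stay of smaller order than the main rate. Dimension $4$ is the genuinely delicate case, since the governing series converges only marginally and the logarithm is produced by the competition between the $k^{-d/2}$ decay of $\P(U_k\in[-1,1]^d)$ and the cutoff set by the range; correspondingly $b_4$ is prescribed directly as $9/(2\pi^2)$ rather than read off from \eqref{9ec10.64}. This marginal behaviour, compounded by the nonexistence of a super-Brownian local time when $d\ge4$, is exactly what makes the self-avoiding BRW — instead of the superprocess limit exploited in \cite{Hong21} for $d\le3$ — the correct comparison object.
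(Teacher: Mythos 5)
Your overall architecture coincides with the paper's: percolation survival is recast as SIR survival, the epidemic's range is dominated by a self-avoiding BRW, and extinction is forced by showing that the expected suppression rate per particle beats the drift $\theta/N_R$, with the genealogical bookkeeping (split $k$ generations back, relative $m\leq k$ generations below the split, multiplicity $\big[\tfrac{j+2}{2}\big]$ at total age $j=k+m$) reproducing \eqref{9ec10.64}; for $d=5,6$ this is essentially the paper's proof. However, there is a genuine quantitative gap in your $d=4$ case. You assert that the marginally divergent series is to be cut off at the diffusive scale $k\asymp R^2$ and that this yields $b_4=9/(2\pi^2)$. It does not: by the local CLT, $\P(U_k\in[-1,1]^4)\sim 2^4(2\pi k/3)^{-2}$, so
\begin{align*}
2^{-4}\sum_{k\leq K}\Big[\frac{k+2}{2}\Big]\,\P(U_k\in[-1,1]^4)\sim \frac{9}{8\pi^2}\log K,
\end{align*}
and $K=R^2$ gives $\tfrac{9}{4\pi^2}\log R$ --- exactly half the required constant, so your argument would only prove the bound for $\theta<9/(4\pi^2)$. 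The correct cutoff is the \emph{time horizon} of the process, $K\asymp N_R=R^4/\log R$ (the paper takes $\tau_R=[N_R/\log N_R]$ in \eqref{5e2.10}; all that matters is $\log\tau_R\sim 4\log R$). Lineages contributing collisions can split as far back as the full horizon $\sim N_R$, since $N_R$ is the scale over which the per-generation drift $\theta/N_R$ accumulates to order one, and there is no spatial cutoff at scale $R$ to truncate the sum earlier: the factor $\P(U_k\in[-1,1]^d)$ already encodes the spatial coincidence for every $k$. This is precisely the content of Lemma \ref{4l2.4} and Section \ref{4s4}, where the sum runs over $k\leq\tau_R$ and produces $\tfrac{9}{8\pi^2}\cdot 4\log R=\tfrac{9}{2\pi^2}\log R$.

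A secondary, repairable vagueness: your closing ``moment and supermartingale argument'' elides a real difficulty. The drift-versus-collision estimate is only established over the window $[0,N_R]$, yielding (as in Proposition \ref{4p1}) that $\E(|\widetilde{Z}_{k_0}|)<1$ for \emph{some} $k_0\leq[N_R]+1$; since the self-avoiding BRW is not a Markov branching process (suppression depends on the entire past), subcriticality of one mean does not immediately give extinction. The paper closes this by a block comparison: a process $\gamma_n$ in which each individual reproduces via an independent copy of $\widetilde{Z}_{k_0}$, births suppressed only within its own family, dominates $\widetilde{Z}_{nk_0}$, and $|\gamma_n|$ is a genuine Galton--Watson process with offspring mean $\E(|\widetilde{Z}_{k_0}|)<1$. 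Some such renewal/domination step must appear in your write-up; without it the inference from a first-moment inequality to almost-sure finiteness of the range is incomplete.
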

\begin{remark}
\no (a) As will be justified later in Section \ref{4s1.3}, our methods for proving the lower bound are believed to be sharp for $d=4$, so we conjecture that the limit as in \eqref{4c1.0} exists, and equals $b_4$, i.e.
 \begin{align} \label{4e1.70}
\lim_{R\to \infty} \frac{\big[p_c(R)V(R)-1\big]R^4}{\log R}=b_4, \quad \text{ in } d=4.
\end{align}
We were informed by Edwin Perkins that the constant $b_4={9}/(2\pi^2)$ had been conjectured by him and Xinghua Zheng. \\

\no (b) In $d=5,6$, Van der Hofstad-Sakai \cite{HS05} conjecture that the limit as in \eqref{4c1.1} exists and equals (see (1.18) of \cite{HS05} with $U^{\star (n+1)}=2^{-d} \P(U_n\in [-1,1]^d)$)
\begin{align}\label{4c10.64}
\widetilde{b_d}=2^{-d}  \P(U_{1}\in [-1,1]^d)+2^{-d}  \sum_{k=2}^\infty \frac{k+2}{2}  \P(U_{k}\in [-1,1]^d).
\end{align}
One can easily check 
\begin{align} \label{4c10.67}
\widetilde{b_d}- b_d=2^{-d}  \sum_{k=1}^\infty \frac{1}{2}  \P(U_{2k+1}\in [-1,1]^d)>0,
\end{align}
hence our results partially confirm their conjectures. It would be desirable to upgrade the lower bound from $b_d$ to $\widetilde{b_d}$, the gap between which we think comes from, say, using the language of branching random walk, the collisions when two or more particles are sent to the same location at the same time. See more discussions below in Remark \ref{4r1}.
\end{remark}

From now on, we fix $0<\theta<b_d$ and consider $R\in \N$ with $R\geq 100+4\theta$. Let $N_R>0$ be given by
\begin{align}\label{6e1.3}
N_R=
\begin{cases}
R^d,&d\geq 5\\
R^4/\log R, &d=4.
\end{cases}
\end{align}
Define
\begin{align}\label{6e1.4}
p(R)=\frac{1+\frac{\theta}{N_R}}{V(R)}.
\end{align}
It suffices to show that $p_c(R)\geq p(R)$ for $R$ large, or equivalently, the percolation does not occur if the Bernoulli parameter $p$ is set to be $p(R)$.\\

\no ${\bf Convention\ on\ Functions\ and\ Constants.}$ Constants whose value is unimportant and may change from line to line are denoted $C, c, c_d, c_1,c_2,\dots$. \\

\section*{Acknowledgements}
The author's work was partly supported by Startup Funding XXX. We thank Edwin Perkins for telling us their conjectured constant for $d=4$.

\section{Proof of the lower bound}

 \subsection{SIR epidemic models} \label{4s1.2}
To prove such a lower bound as in Theorem \ref{4t0}, we will use the connection between the bond percolation and the discrete-time SIR epidemic model following \cite{FP16} and \cite{Hong21}. The SIR epidemic process on $\Z^d_R$ is defined by recording the status of all the vertices on $\Z_R^d$:  For any time $n\geq 0$, each vertex $x\in \Z^d_R$ is either infected, susceptible or recovered, the set of which is denoted respectively by $\eta_n, \xi_n, \rho_n$. Given the finite initial configurations of infected sites, $\eta_0$, and recovered sites, $\rho_0$, the epidemic evolves as follows: An infected site $x\in \eta_n$ infects its susceptible neighbor $y\in \xi_n$, $y\sim x$ with probability $p=p(R)$, where the infections are conditionally independent given the current configuration. Infected sites at time $n$ become recovered at time $n+1$, and recovered sites will be immune from further infections and stay recovered. Denote by $(x,y)$ the undirected edge between two neighbors $x\sim y$ in $\Z_R^d$ and let $E(\Z_R^d)$ be the set of all such edges. If we assign i.i.d. Bernoulli random variables $B(e)$ with parameter $p=p(R)$ to each edge $e\in E(\Z_R^d)$, then the above SIR epidemic process can be formulated as
\begin{align}\label{4e10.14}
\eta_{n+1}=&\bigcup_{x\in \eta_n} \{y\in \xi_n: B(x,y)=1\},\quad \rho_{n+1}=\rho_{n}\cup \eta_n,\quad \xi_{n+1}=\xi_{n}\backslash \eta_{n+1}.
\end{align}

To give a more specific description of the above SIR epidemic, we denote by $\cF_n=\sigma(\rho_0, \eta_k, k\leq n)$ the $\sigma$-field generated by the SIR epidemic.  One can easily conclude from \eqref{4e10.14} that $\rho_n \in \cF_n$ since $\rho_n=\rho_0 \cup \eta_1\cup\cdots \cup \eta_{n-1}$ and $\xi_n\in \cF_n$ by $\xi_n=(\eta_n\cup \rho_n)^c$.  Let $\partial C_n$ to be the set of infection edges given by
\begin{align}
\partial C_n:=\{(x,y)\in E(\Z_R^d): x\in \eta_n, y\in \xi_n\}. 
 \end{align}
 Then $\partial C_n \in \cF_n$. For each $y\in \Z_R^d$, define
 \begin{align}\label{4e7.55}
D_n(y)=\{x\in \eta_n: (x,y) \in \partial C_n\}
 \end{align}
 to be the set of infected sites that will possibly infect $y$.
 It follows from (1.6) of \cite{FP16} that if $S=\{(x_i,y_i): i\leq m\}$ is a set of distinct edges in $\Z_R^d$ and $V_0=\{y_i: i\leq m\}$, then for any $V\subset V_0$,
   \begin{align}\label{4e7.01}
\P(\eta_{n+1}=V|\cF_n)=\prod_{y\in V_0-V} (1-p)^{|D_n(y)|} \prod_{y\in V} \Big[1-(1-p)^{|D_n(y)|}\Big] \text{ a.s. on } \{\partial C_n=S\}.
\end{align}
The law of the SIR epidemic can be uniquely determined by \eqref{4e7.01} and the joint law of $(\eta_0,\rho_0)$. We refer the reader to Sections 1.2 and 2.1 of \cite{FP16} for more details. \\

Now that the SIR epidemic has been constructed, we may consider its extinction/survival. 
\begin{mydef}\label{4def1.3}
We say that a SIR epidemic {\bf survives} if with positive probability, $\eta_n\neq \emptyset$ for all $n\geq 1$; we say the epidemic becomes {\bf extinct} if with probability one, $\eta_n= \emptyset$ for some finite $n\geq 1$.
\end{mydef}

{\bf Equivalence between the bond percolation and the SIR epidemic:} The connection between the range-$R$ bond percolation and the SIR epidemic can be described as follows: If the epidemic $\eta$ starting with $\eta_0=\{0\}$ and  $\rho_0=\emptyset$ survives, then with positive probability, there is an infinite sequence of distinct infected sites $\{x_k,k\geq 0\}$ with $x_0=0$ such that $x_k\in \eta_k$, $x_{k}$ is a neighbor of $x_{k-1}$, and $x_{k-1}$ infects $x_k$ at time $k$. Hence the edge between $x_{k-1}$ and $x_k$ is open for all $k\geq 1$. This gives that with positive probability, percolation occurs from $\eta_0=\{0\}$ to infinity in the range-$R$ bond percolation.  Conversely, if percolation from $\{0\}$ to infinity occurs in the percolation model, then an infinite sequence of distinct sites for infection must exist and so the epidemic survives. \\

The above implies that to prove Theorem \ref{4t0}, it suffices to show that the SIR epidemic $\eta$ starting with $\eta_0=\{0\}$ and  $\rho_0=\emptyset$ becomes extinct. Throughout the rest of this paper, we will only consider the epidemic with finite initial condition $(\eta_0, \rho_0)$. For any disjoint finite sets $\eta_0$ and $\rho_0$, one may use \eqref{4e10.14} with an easy induction to conclude both $\eta_n$ and $\rho_n$ are finite for all $n\geq 0$. Hence it follows that
\begin{align}\label{45equiv}
 \cup_{n=0}^\infty \eta_n \text{ is not a compact set} \Leftrightarrow\eta_n\neq \emptyset, \forall n\geq 0.
\end{align}

To make a summary, it remains to show that the SIR epidemic $\eta$ starting with $\eta_0=\{0\}$ and  $\rho_0=\emptyset$ satisfies
\begin{align} \label{4equiv}
 \text{ with probability one } \cup_{n=0}^\infty \eta_n \text{ is a compact set.}
\end{align}
We will do this by coupling the SIR epidemic with an appropriate branching envelope.

\subsection{A modified branching envelope} \label{4s1.3}

First we will couple the epidemic $\eta$ with a dominating branching random walk $Z=(Z_n, n\geq 0)$ on $\Z_R^d$. The state space for our branching random walk in this paper is the space of nonnegative point measures on $\Z_R^d$ denoted by $M_P(\Z_R^d)$: for any $\mu \in M_P(\Z_R^d)$, there are some $n\geq 0$, and $a_k\geq 0$, $x_k\in \Z_R^d$, $\forall 1\leq k\leq n$ such that $\mu=\sum_{k=1}^n a_k \delta_{x_k}$. Write $\mu(x)=\mu(\{x\})$. For any function $\phi: \Z^d_R \to \R$, write $\mu(\phi)=\sum_{x\in \Z^d_R} \phi(x) \mu(x)$. Set $|\mu|=\mu(1)$ to be its total mass.  

Totally order the set $\cN(0)$ as $\{e_1, \cdots, e_{V(R)}\}$ and then totally order each $\cN(0)^n$ lexicographically. Following Section 2.2 of Frei and Perkins \cite{FP16}, we will use the following labeling system for our particle system:
\begin{align}\label{4e1.16}
I=\bigcup_{n=0}^\infty  \cN(0)^n=\{(\alpha_1, \cdots, \alpha_n): \alpha_i \in \cN(0), 1\leq i\leq n\},
\end{align}
where $\cN(0)^0=\{\emptyset\}$ labels the root index. Let $|\emptyset|=0$. If $\alpha=(\alpha_1, \cdots, \alpha_n)$, we let $|\alpha|=n$ be the generation of $\alpha$, and write
$\alpha|i=(\alpha_1, \cdots, \alpha_i)$ for $1\leq i\leq n$. Let $\pi \alpha=(\alpha_1, \cdots, \alpha_{n-1})$ be the parent of $\alpha$ and let $\alpha \vee e_i=(\alpha_1, \cdots, \alpha_n, e_i)$ be an offspring of $\alpha$ whose position relative to its parent is $e_i$. Assign an i.i.d. collection of Bernoulli random variables $\{B^\alpha: \alpha \in I, |\alpha|>0\}$ to the edges connecting the locations of $\alpha$ and its parent $\pi \alpha$ so that the birth in this direction is valid with probability $p(R)$ (and invalid with probability $1-p(R)$).  For each $n\geq 1$, write $\alpha \approx n$ iff $|\alpha|=n$ and $B^{\alpha|i}=1$ for all $1\leq i\leq n$ so that such an $\alpha$ labels a particle alive in generation $n$.  For each $\alpha \in I$, define its current location by 
\begin{align}\label{4e1.17}
Y^\alpha=
\begin{cases}
\sum_{i=1}^{|\alpha|}  \alpha_i, &\text{ if } \alpha\approx |\alpha|,\\
\Delta, &\text{ otherwise. }
\end{cases}
\end{align}
Define for any $n\geq 0$ that
\begin{align}\label{4eb2.21}
Z_n:=\sum_{|\alpha|= n} \delta_{Y^\alpha} 1(Y^\alpha \neq \Delta).
\end{align}
 Then $(Z_n)$ gives the empirical distribution of a branching random walk where in generation $n$, each particle gives birth to one offspring to each of its $V(R)$ neighboring positions independently with probability $p(R)$. 
Define $Z_n(x)=Z_n(\{x\})$ for any $x\in \Z^d_R$. .\\

For $\mu,\nu \in M_P(\Z_R^d)$, we say $\nu$ {\bf dominates} $\mu$ if $\nu(x)\geq \mu(x)$ for all $x\in \Z_R^d$. For any set $Y\subset \Z_R^d$, by slightly abusing the notation, we write $Y:=\sum_{x\in Y} \delta_{x}$ so that the set $Y$ naturally defines a point measure $Y\in M_P(\Z^d_R)$ taking values in $\{0,1\}$. In particular we define $\eta_n\in M_P(\Z^d_R)$ for each $n\geq 0$ by letting $\eta_n:=\sum_{x\in \eta_n} \delta_{x}$. The following lemma is from Proposition 2.3 of \cite{FP16} that defines the coupled SIR epidemic $(\eta_n)$ inductively with the dominating $(Z_n)$.

 \begin{lemma}\label{4l1.4}
On a common probability space, we can define a SIR epidemic process $\eta$ starting from $(\{0\}, \emptyset)$, and a branching random walk ${Z}$ as in \eqref{4eb2.21}, such that 
\[
 \eta_n(x)\leq {Z}_n(x), \forall x\in \Z_R^d, n\geq 0.
\]
\end{lemma}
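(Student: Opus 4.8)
The plan is to realize both processes on the single probability space carrying the i.i.d.\ Bernoulli family $\{B^\alpha : \alpha \in I,\ |\alpha|>0\}$ that already defines $Z$ through \eqref{4e1.17}--\eqref{4eb2.21}, and to build the epidemic $\eta$ out of these \emph{same} variables so that every infected site is carried by a living branching-random-walk particle sitting at its location. Concretely, I would construct, by induction on $n$, not merely the sets $\eta_n,\rho_n,\xi_n$ but also an assignment $\Phi_n$ sending each $x\in\eta_n$ to a label $\Phi_n(x)=\alpha$ with $|\alpha|=n$, $\alpha\approx n$ and $Y^\alpha=x$, i.e.\ to a particle alive in generation $n$ located at $x$. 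Since distinct infected sites occupy distinct positions and distinct labels carry distinct positions, $\Phi_n$ is automatically injective, and its mere existence forces $Z_n(x)\geq 1=\eta_n(x)$ at each occupied site, which is exactly the asserted domination $\eta_n(x)\leq Z_n(x)$.

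For the base case set $\eta_0=\{0\}$, $\rho_0=\emptyset$ and $\Phi_0(0)=\emptyset$ (the root), noting $Y^\emptyset=0$ and $Z_0=\delta_0$. For the inductive step, suppose $\Phi_n$ has been built using only the variables $\{B^\beta:|\beta|\leq n\}$, so that all generation-$(n+1)$ variables remain independent of $\cF_n$. Each infected $x\in\eta_n$, carried by $\alpha=\Phi_n(x)$, attempts to infect each susceptible neighbour $y=x+e_i\in\xi_n$, and I declare the attempt successful precisely when $B^{\alpha\vee e_i}=1$, which is exactly the event that the branching particle $\alpha$ sends a valid offspring to $y$. I then let $\eta_{n+1}$ be the susceptible sites receiving at least one successful infection, put $\rho_{n+1}=\rho_n\cup\eta_n$ and $\xi_{n+1}=\xi_n\setminus\eta_{n+1}$ as in \eqref{4e10.14}, and define $\Phi_{n+1}(y)$ to be $\alpha\vee e_i$ for the lexicographically smallest infecting pair $(\alpha,e_i)$; this label is alive in generation $n+1$ (its entire ancestral line of Bernoullis equals $1$) and sits at $Y^{\alpha\vee e_i}=x+e_i=y$, so the induction hypothesis is restored.

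The one point that genuinely needs checking is that this recipe produces a bona fide SIR epidemic, i.e.\ that the constructed $\eta$ obeys the conditional law \eqref{4e7.01}; since that identity together with the law of $(\eta_0,\rho_0)$ determines the epidemic uniquely, verifying it suffices. Fix a target $y$ with $D_n(y)=\{x_1,\dots,x_m\}$, $m=|D_n(y)|$, carried by distinct labels $\alpha_1,\dots,\alpha_m$. The corresponding attempts use the Bernoullis $B^{\alpha_1\vee e_{i_1}},\dots,B^{\alpha_m\vee e_{i_m}}$, which are distinct generation-$(n+1)$ variables, hence independent with parameter $p=p(R)$ and, crucially, never consulted in forming $\cF_n$ because $\Phi_n$ consumed only generations $\leq n$. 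Thus $y$ escapes infection with probability $(1-p)^{m}=(1-p)^{|D_n(y)|}$ and is infected with the complementary probability, independently across distinct targets (whose attempts use disjoint label sets, as labels over $y\neq y'$ carry different positions); multiplying over $y\in V_0$ reproduces \eqref{4e7.01}. I expect this bookkeeping---showing that every infection attempt draws a fresh, unused, independent Bernoulli, so that collisions (several particles landing on one site) and births onto already-visited sites are the \emph{only} discrepancies between $\eta$ and $Z$---to be the main obstacle; it is exactly the suppression of those births in the epidemic that degrades the equality $\eta=Z$ into the inequality $\eta\leq Z$.
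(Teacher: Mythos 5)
Your proposal is correct and takes essentially the same route as the paper: the paper quotes this lemma from Proposition 2.3 of \cite{FP16}, whose argument (mirrored in the paper's own proof of the analogous Lemma \ref{4l1.5}) is exactly this inductive coupling in which each infected site is tagged by an alive particle at the same location, infections are decided by the fresh generation-$(n+1)$ Bernoulli variables $B^{\alpha\vee e_i}$, and the epidemic law is verified via \eqref{4e7.01} using the distinctness of the consulted labels and their independence from $\cF_n$. No gaps.
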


The above coupling, however, will not give the extinction of the SIR epidemic as the dominating BRW is supercritical and survives with positive probability (recall from \eqref{6e1.4} that $p(R)V(R)=1+\frac{\theta}{N_R}>1$). So we consider another $M_P(\Z_R^d)$-valued process $\{\widetilde{Z}_n, n\geq 0\}$ defined inductively by
 \begin{align}\label{4eb2.24}
\widetilde{Z}_n=\sum_{|\alpha|= n} \delta_{Y^\alpha} 1(Y^\alpha \neq \Delta, Y^\alpha \notin \cup_{k=0}^{n-1}S(\widetilde{Z}_k)).
\end{align}
 In the above, $S(\mu)=\text{Supp}(\mu)$ is the support of measure $\mu\in M_P(\Z_R^d)$. By defining such a process, we obtain the branching random walk where particles never give birth to places that have been colonized before, an analog to the well-known self-avoiding random walk. In what follows, we will call $\widetilde{Z}$ ``{\bf a self-avoiding branching random walk}''. Note we still allow two particles to give birth to the same site at the same time.  By definition, $Z_n$ dominates $\widetilde{Z}_n$ for any $n\geq 0$. Nevertheless, it is not true that $\widetilde{Z}_n$ will dominate the SIR epidemic $\eta_n$ as there might exist some site $x$ that is visited earlier by $\widetilde{Z}$ and later by $\eta$. Fortunately, the total colony of $\widetilde{Z}$ dominates that of $\eta$ by the following lemma, so $\widetilde{Z}$ suffices for our purposes given \eqref{4equiv}.

 \begin{lemma}\label{4l1.5}
On a common probability space, we can define a SIR epidemic process $\eta$ starting from $(\{0\}, \emptyset)$, and a self-avoiding branching random walk $\widetilde{Z}$ as in \eqref{4eb2.24}, such that 
\[
\cup_{k=0}^{n} \eta_k \subset \cup_{k=0}^{n} S(\widetilde{Z}_k),  \forall n\geq 0.
\]
\end{lemma}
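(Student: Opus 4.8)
The plan is to reduce the cumulative inclusion to a pointwise-in-space statement at each fixed time, and then read everything off the coupling already furnished by Lemma~\ref{4l1.4}. Writing $\widetilde R_n=\cup_{k=0}^{n}S(\widetilde Z_k)$ for the total colony of the self-avoiding branching random walk up to time $n$, I would first observe that since $\widetilde R_k\subset \widetilde R_n$ for every $k\leq n$, it suffices to prove
\[
\eta_n\subset \widetilde R_n \quad \text{for every } n\geq 0;
\]
taking the union over $k\leq n$ then gives $\cup_{k=0}^{n}\eta_k\subset \widetilde R_n$, which is exactly the claim. I would work on the common probability space of Lemma~\ref{4l1.4}, on which $\eta$ and the branching random walk $Z$ of \eqref{4eb2.21} are coupled so that $\eta_n(x)\leq Z_n(x)$ for all $x$ and $n$. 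Since $\widetilde Z$ of \eqref{4eb2.24} is a deterministic functional of the same labelled family $\{B^\alpha\}$ that produces $Z$, all three processes $\eta,Z,\widetilde Z$ live together on this space, and no new randomness is introduced.

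The key step is the purely deterministic implication
\[
Z_n(x)\geq 1 \ \Longrightarrow\ x\in \widetilde R_n,
\]
valid for every site $x$ and every $n\geq 0$. To prove it I would split into two cases. If $x\in \widetilde R_{n-1}$, then $x\in \widetilde R_n$ automatically. Otherwise $x\notin \cup_{k=0}^{n-1}S(\widetilde Z_k)$, while $Z_n(x)\geq 1$ supplies, via \eqref{4eb2.21}, some label $\alpha$ with $|\alpha|=n$ and $Y^\alpha=x\neq \Delta$; for this $\alpha$ both conditions in the indicator of \eqref{4eb2.24} hold, so $x=Y^\alpha\in S(\widetilde Z_n)\subset \widetilde R_n$. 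This is exactly the point that dissolves the difficulty flagged before the lemma: a site may be colonised by $\widetilde Z$ strictly earlier than by $\eta$, but the self-avoiding rule in \eqref{4eb2.24} only suppresses births onto sites \emph{already} colonised and never prevents the \emph{first} colonisation of a site at which a live $Z$-particle sits.

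Combining the two steps finishes the argument: for any $k\leq n$ and any $y\in \eta_k$ we have $Z_k(y)\geq \eta_k(y)=1$ by Lemma~\ref{4l1.4}, hence $y\in \widetilde R_k\subset \widetilde R_n$ by the deterministic implication, and therefore $\cup_{k=0}^{n}\eta_k\subset \widetilde R_n=\cup_{k=0}^{n}S(\widetilde Z_k)$. I expect the only genuinely delicate point to be conceptual rather than computational, namely verifying that the coupling invoked is the one in which $Z$ is precisely the labelled walk of \eqref{4eb2.21}, so that $\widetilde Z$ is generated from the identical randomness; once this coherence is established, the case analysis above is immediate and no quantitative estimates are required.
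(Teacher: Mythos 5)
Your argument is internally consistent only under a word-for-word reading of \eqref{4eb2.24} in which a particle is suppressed according to \emph{its own} position alone, so that descendants of suppressed particles may still be born. That is not the process the paper intends, and it is not a process for which the lemma can do its job. The intended $\widetilde Z$ is the hereditary one spelled out in Section \ref{4s2} via the stopping time $\zeta_\beta^1$ of \eqref{5e11.4}: a particle contributes to $\widetilde Z_{|\beta|}$ only if its \emph{entire ancestral line} is alive and never stepped onto a previously colonized site, so a suppressed birth kills the whole line of descent. (This reading is forced on the paper: in the proof of Theorem \ref{4t0}, $|\widetilde Z_{nk_0}|=0$ for all large $n$ is used to conclude that $\cup_k S(\widetilde Z_k)$ is compact, which would be false if descendants of suppressed particles could re-enter $\widetilde Z$; and all of the moment bounds behind Proposition \ref{4p1} are computed for the $\zeta^1$-process.) For the hereditary process your key deterministic implication $Z_n(x)\geq 1\Rightarrow x\in\widetilde R_n$ fails. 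Concretely, let $e\neq e'$ be two neighbors of the origin and consider the realization in which the only successful Bernoulli variables are those attached to the labels $(e)$, $(e,-e)$ and $(e,-e,e')$; this event has positive probability. Then $Z_3(e')=1$, but the generation-two particle sits at the origin, which lies in $S(\widetilde Z_0)$, so it and its child are suppressed in $\widetilde Z$: one gets $\cup_{k\leq 3}S(\widetilde Z_k)=\{0,e\}\not\ni e'$, while $\cup_{k\le 3}S(Z_k)=\{0,e,e'\}$.

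The failure is structural, not a matter of sharpening the case analysis. Your proof uses nothing about $\eta$ beyond the pointwise bound $\eta_n\leq Z_n$ from Lemma \ref{4l1.4}; if such an argument were valid it would equally prove $\cup_{k\leq n}S(Z_k)\subset\cup_{k\leq n}S(\widetilde Z_k)$, i.e.\ that the range of the self-avoiding walk coincides with the range of the supercritical walk $Z$ (the reverse inclusion being trivial). That conclusion is contradicted by the example above, and it would also make $\widetilde Z$ pointless: the paper goes on to show $\cup_k S(\widetilde Z_k)$ is a.s.\ compact, whereas $Z$ survives with positive probability and then has unbounded range. What makes the lemma true is that the epidemic has its \emph{own} suppression rule (recovered sites cannot be reinfected), and the coupling must align the two suppression mechanisms; this cannot be extracted from Lemma \ref{4l1.4} used as a black box. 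The paper's proof instead builds $\eta$ \emph{inductively from} $\widetilde Z$: for each infected site $x\in\eta_n$ it locates the unique time $k_n^x$ with $\widetilde Z_{k_n^x}(x)\geq 1$ and a $\widetilde Z$-particle $\alpha_{k_n^x}$ there, lets the Bernoulli variables attached to \emph{that particle's children} decide whom $x$ infects, and then verifies separately that the inclusion propagates and that the resulting process has the SIR law \eqref{4e7.01}. That construction, tying the epidemic's infection randomness to $\widetilde Z$-alive particles, is exactly the step your proposal is missing.
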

\begin{proof}
 Let $\widetilde{Z}$ be as in \eqref{4eb2.24}. We will define the coupled SIR epidemic process $(\eta_n, \rho_n, \xi_n: n\geq 0)$ inductively on $n$ so that
 \begin{align}\label{4e2.50}
 \cup_{k=0}^{j} \eta_k \subset \cup_{k=0}^{j} S(\widetilde{Z}_k), \forall j\leq n
\end{align}
 holds and $(\eta_n, \rho_n, \xi_n: j\leq n)$ has the law of a SIR epidemic process with filtration $\cF_n:=\sigma(\rho_0,\eta_k, k\leq n)$. First let $\eta_0=\{0\}$ and $\rho_0=\emptyset$. Assuming the above holds for some $n\geq 0$, we will prove the case for $n+1$. Let $Y_n=\{y\in \Z_R^d: \exists x\in \eta_n, (x,y)\in \partial C_n\}$ be the set of potential infected individuals at time $n+1$. If $x\in \eta_n$, then 
 \eqref{4e2.50} implies there exists some unique $0\leq k_n^x\leq n$ such that $\widetilde{Z}_{k_n^x} (x)\geq 1$ and the set $K:=\{\alpha \approx k_n^x: Y^\alpha=x\}$ is not empty. The uniqueness of $k_n^x$ follows from the definition of $\widetilde{Z}$. Use the total order of $I$ to pick the minimal element in $K$ denoted by $\alpha_{k_n^x}$ and define
  \begin{align}\label{4e2.51}
&\eta_{n+1} =\{y\in Y_n: \exists x\in D_n(y), B^{\alpha_{k_n^x} \vee (y-x)}=1\},\nn\\
& \xi_{n+1}=\xi_n-\eta_{n+1}, \quad \rho_{n+1}=\xi_n\cup \eta_n,
\end{align}
where $D_n(y) \subset \eta_n$ is as in \eqref{4e7.55} and is $\cF_n$-measurable. Intuitively speaking, if $k_n^x=n$ for $x\in \eta_n$, then we let $x$ infect its neighboring sites as in a usual SIR epidemic; if $k_n^x<n$, we use the historical trajectory of $\{\widetilde{Z}_{k}, k\leq n\}$ to define the sites infected by $x$ at time $n+1$. In any case, one can check that \eqref{4e2.50} holds for $n+1$. Moreover, the infection events $\{B^{\alpha_{k_n^x} \vee (y-x)}=1\}$
are independent of $\cF_n$ as they never appear before in the definition of $\eta_k$ for any $1\leq k\leq n$, meaning that this is the first time for the SIR epidemic to infect those $y\in \eta_{n+1}$. 

To rigorously check the inductive step, we note that for any $y\in \eta_{n+1}$, by definition \eqref{4e2.51} there exists some $x\in D_n(y)\subset \eta_n$ such that $B^{\alpha_{k_n^x} \vee (y-x)}=1$ for some $\alpha_{k_n^x} \approx k_n^x$ with $0\leq k_n^x\leq n$. So we get $\alpha_{k_n^x} \vee (y-x) \approx k_n^x+1$ and 
\[
Y^{\alpha_{k_n^x} \vee (y-x)}=Y^{\alpha_{k_n^x}}+(y-x)=y.
\]
If $y\in  \cup_{m=0}^{k_n^x} S(\widetilde{Z}_m)$, then it is immediate that $y\in \cup_{m=0}^{n+1} S(\widetilde{Z}_m)$; if $y\notin  \cup_{m=0}^{k_n^x} S(\widetilde{Z}_m)$, then by the definition \eqref{4eb2.24} for $\widetilde{Z}_{k_n^x+1}$, we get $y\in S(\widetilde{Z}_{k_n^x+1})$, giving $y\in \cup_{m=0}^{n+1} S(\widetilde{Z}_m)$ as well. In any case, we get $y\in \cup_{m=0}^{n+1} S(\widetilde{Z}_m)$, and so
  \begin{align*} 
 \eta_{n+1} \subset \cup_{k=0}^{n+1} S(\widetilde{Z}_k).
\end{align*}
Together with the inductive assumption \eqref{4e2.50}, we conclude
  \begin{align*} 
 \cup_{k=0}^{j} \eta_k \subset \cup_{k=0}^{j} S(\widetilde{Z}_k), \forall j\leq n+1,
\end{align*}
as required.

It remains to prove that the above-defined process $(\eta_n)$ has the law of a SIR epidemic process, which will follow if one shows that it satisfies \eqref{4e7.01}: If $S=\{(x_i,y_i): 1\leq i\leq m\}$ is a set of distinct edges in $\Z_R^d$ and $V\subset \{y_i: 1\leq i\leq m\}=V_2$, then
  \begin{align}\label{4e2.52}
\P(\eta_{n+1}=V|\cF_n)=\prod_{y\in V_2-V} (1-p)^{|D_n(y)|} \prod_{y\in V} [1-(1-p)^{|D_n(y)|}] \text{ a.s. on } \{\partial C_n=S\}.
\end{align}
To see this, by \eqref{4e2.51}, the left-hand side equals
 \begin{align} 
\P\Big(&\forall y\in V_2-V, \forall x\in D_n(y), B^{\alpha_{k_n^x} \vee (y-x)}=0,\nn\\
&\text{ and } \forall y\in V, \exists x\in D_n(y), B^{\alpha_{k_n^x} \vee (y-x)}=1\Big|\cF_n\Big).
\end{align}
 Then \eqref{4e2.52} follows by the observing that the indexes $\{\alpha_{k_n^x} \vee (y-x)\}$ are distinct by the choice of $k_n^x$, $\alpha_{k_n^x}$ and that conditioning on $\cF_n$, $\{B^{\alpha_{k_n^x} \vee (y-x)}\}$ are i.i.d. Bernoulli random variables.
\end{proof}
  
The above lemma implies that it suffices to show the extinction of $\widetilde{Z}$.

\begin{proposition}\label{4p1}
Let $4\leq d\leq 6$. There exists some constant $C(d)>0$ such that for any $R>C(d)$, there exists some $1\leq k_0\leq [N_R]+1$ such that $\E(|\widetilde{Z}_{k_0}|)\leq 1-(\frac{b_d-\theta}{2}\wedge \frac{1}{4})<1$.
\end{proposition}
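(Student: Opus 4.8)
The plan is to turn the extinction question into a one-step mean recursion for $\E|\widetilde Z_n|$ and to show that self-avoidance depresses the per-generation growth factor below $1$ by an amount dictated by $b_d$. Writing $m:=p(R)V(R)=1+\theta/N_R$, let $\cG_n$ be the $\sigma$-field generated by the first $n$ generations of $\widetilde Z$ and let $C_n:=\cup_{k=0}^{n}S(\widetilde Z_k)$ be the colonized set. Since a live particle at $x$ produces in the next generation one offspring at each neighbour $y\in\cN(x)$ that has not yet been colonized, independently with probability $p(R)$, and since births onto sites of $C_n$ are suppressed,
\[
\E\big[\,|\widetilde Z_{n+1}|\,\big|\,\cG_n\,\big]=p(R)\sum_{x}\widetilde Z_n(x)\,\big|\{y\in\cN(x):y\notin C_n\}\big|=m\,|\widetilde Z_n|-p(R)\,\widetilde Z_n(c_n),
\]
where $c_n(x):=|\{y\in\cN(x):y\in C_n\}|$ counts the already-colonized neighbours of $x$. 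Hence, with $\bar c_n:=\E[\widetilde Z_n(c_n)]/\E|\widetilde Z_n|$, the mean evolves multiplicatively, $\E|\widetilde Z_{n+1}|=m\big(1-\bar c_n/V(R)\big)\E|\widetilde Z_n|$, and everything reduces to showing that a typical live particle has, in expectation, at least about $2^db_d$ colonized neighbours.

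The core step is to evaluate $\bar c_n$, a self-intersection (local-time type) quantity. I would use the spine/size-biasing description of the branching random walk: conditioning the lineage $\alpha\approx n$ to be alive, its colonized neighbours are the relatives $\beta$ whose position lies in $\cN(Y^\alpha)$. Organising relatives by the generation $l$ of their most recent common ancestor with $\alpha$ and by their own generation $j\le n$, the displacement $Y^\alpha-Y^\beta$ is a sum of $(n-l)+(j-l)=n+j-2l$ independent neighbour steps, so the chance it falls in $\cN(0)$ equals, to leading order, $\P(S_{n+j-2l}\in\cN(0))$, where $S_s$ denotes a sum of $s$ neighbour steps. A local central limit theorem, together with $|\cN(0)|=V(R)$ and the fact that a scaled neighbour step is asymptotically uniform on $[-1,1]^d$, gives $\P(S_s\in\cN(0))\sim\P(U_s\in[-1,1]^d)$. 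Counting relatives — one ancestor at each level, contributing weight $1$ to every $\P(U_s\in\cdot)$, and, for each $s$, an expected $\approx 1$ size-biased side-branch descendant for each of the $\lfloor s/2\rfloor$ admissible pairs $(l,j)$, contributing weight $\lfloor s/2\rfloor$ — I would obtain
\[
\bar c_n\ \longrightarrow\ \sum_{s\ge1}\big(1+\lfloor s/2\rfloor\big)\,\P(U_s\in[-1,1]^d)=\sum_{s\ge1}\Big[\tfrac{s+2}{2}\Big]\P(U_s\in[-1,1]^d)=2^d b_d,
\]
the series converging when $d\ge5$.

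Iterating the recursion gives $\log\E|\widetilde Z_n|\approx N_R^{-1}\sum_{i=1}^{n}(\theta-\beta_i)$ with $\beta_i:=\bar c_i\,N_R/V(R)\to b_d$. For $d\ge5$, $\beta_i$ reaches $b_d$ after $O(1)$ generations and the tail $\sum_i(b_d-\beta_i)$ converges, so at $k_0=[N_R]+1$ one finds $\E|\widetilde Z_{k_0}|\approx e^{-(b_d-\theta)}$; since $e^{-x}\le 1-(\tfrac x2\wedge\tfrac14)$ for $x\ge0$, taking $x=b_d-\theta$ yields the stated bound. For $d=4$ the series diverges: a precise local limit theorem gives $\P(U_s\in[-1,1]^4)\sim c_4 s^{-2}$ with $c_4=36/\pi^2$, whence $\bar c_i\sim\tfrac{c_4}{2}\log i$ and $\beta_i\sim\tfrac{c_4}{2^5}\,\log i/\log R$. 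The normalization $N_R=R^4/\log R$ is exactly what makes $N_R^{-1}\sum_{i\le N_R}\beta_i\to c_4/8=9/(2\pi^2)=b_4$, so again $\E|\widetilde Z_{[N_R]+1}|\approx e^{-(b_4-\theta)}$, completing the argument.

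The main obstacle is the middle paragraph: upgrading the heuristic count of colonized neighbours to a rigorous bound on $\bar c_n$ that is uniform over $1\le n\le N_R$. Three points require care: (i) a local central limit theorem for sums of neighbour steps strong enough to replace $\P(S_s\in\cN(0))$ by $\P(U_s\in[-1,1]^d)$ with summable error across the whole range of $s$, and to pin down the constant $c_4$ when $d=4$; (ii) the spine/size-biasing count of relatives, and the verification that counting colonized (i.e.\ pruned) relatives of $\widetilde Z$ rather than relatives of the dominating $Z$ costs only a lower-order term because collisions are rare; and (iii) controlling the accumulated errors — truncation of the series at generation $n$, boundary effects near $\cN(0)$, and the replacements $m\approx1$, $m^{\,\cdot}\approx1$ — so that they remain $o(1)$ after being summed over the $\approx N_R$ generations, the logarithmic bookkeeping in $d=4$ being the most delicate.
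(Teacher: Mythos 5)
Your outline is, in substance, the paper's own strategy: your one-step identity $\E\big[\,|\widetilde Z_{n+1}|\,\big|\,\cG_n\big]=m|\widetilde Z_n|-p(R)\,\widetilde Z_n(c_n)$ is exactly the paper's starting point \eqref{5e1.5} (with $\nu(\beta)=c_n(Y^\beta)$); your count of relatives organized by the most recent common ancestor --- one ancestor plus $\lfloor s/2\rfloor$ side-branch relatives at displacement length $s$, the constraint that relatives have generation at most $n$ being what yields $b_d$ rather than $\widetilde{b_d}$ --- is the computation behind Lemma \ref{4l2.4}; and your constant $c_4=36/\pi^2$ agrees with the paper's $g(k,m)=2^d(2\pi/3)^{-d/2}(k+m)^{-d/2}$ at $d=4$. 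The closing inequality $e^{-x}\le 1-(\frac{x}{2}\wedge\frac14)$ for $x\ge0$ is also correct. So nothing in the plan is wrong; the issue is what is missing.

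The paragraph you yourself flag as ``the main obstacle'' is not a deferred technicality: it is the actual content of the proof, namely Lemma \ref{4l2.1}, whose proof via Lemmas \ref{4l2.2}, \ref{4l2.3} and \ref{4l2.4} occupies Sections \ref{4s3}--\ref{4s6} of the paper. Concretely, your $\bar c_n$ is defined through the pruned process (both the live particles and the colonized set $C_n$ belong to $\widetilde Z$), whereas your spine computation counts relatives of the unpruned BRW; to connect the two one must (a) pass from particle visits to \emph{distinct} colonized sites, i.e.\ control multiple occupancy (Lemma \ref{4l2.2}), and (b) disentangle the pruning indicators $\zeta^1_\beta>|\beta|$, $\zeta^1_\gamma>|\gamma|$ from the collision event being counted (Lemma \ref{4l2.3}) --- note that a relative descending from a pruned ancestor colonizes nothing, so this is not settled by ``collisions are rare'': in $d\ge5$ the expected number of collisions up to time $N_R$ is $O(1)$, not $o(1)$ (see \eqref{4e5.66}), and pruning effects propagate through the genealogy. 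What makes these estimates tractable in the paper, and what your plan lacks, is a one-sidedness device: since only an upper bound on $\E|\widetilde Z_{[N_R]+1}|$ --- hence only a \emph{lower} bound on the collision term --- is required, the paper first discards all collisions except those with relatives branching off within the last $\tau_R$ generations (the truncation $\nu_\tau\le\nu$ in \eqref{5e3.1}); this is precisely what permits the Markov-property decoupling $\E(\cU_\alpha(n)\,|\,\cG_{n-\tau_R})=b_d^{\tau_R}$ in \eqref{4e3.10} and reduces everything to the single constant $b_d^{\tau_R}\to b_d$, instead of the uniform-in-$n$ evaluation of $\bar c_n$ that your multiplicative iteration presupposes. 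Relatedly, the paper's endgame is a dichotomy --- either $\E|\widetilde Z_n|\le1-\eps_0$ already for some $n\le[N_R]$, or the accumulated negative drift forces $\E|\widetilde Z_{[N_R]+1}|\le1-\eps_0$ --- which needs the collision bound only in summed form; adopting it would spare you the per-generation asymptotics for $\bar c_n$ as well.
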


Assuming Proposition \ref{4p1}, we will finish the proof of our main result Theorem \ref{4t0}.
\begin{proof}[Proof of Theorem \ref{4t0} assuming Proposition \ref{4p1}]
 Consider a branching random walk $(\gamma_n, n\geq 0)$ on $\Z_R^d$ starting from a single ancestor at the origin. For each $n\geq 1$, we let all the individuals in $\gamma_{n-1}$ give birth to independent copies of $\widetilde{Z}_{k_0}$ to obtain $\gamma_n$, in which an offspring is suppressed if and only if the particles jump onto a site that has been visited before by another offspring of the same ancestor. There is no such ancestral restriction in the original self-avoiding branching random walk $(\widetilde{Z}_n)$, meaning it's more likely that particles in $(\widetilde{Z}_n)$ will collide, so one may couple $(\widetilde{Z}_n)$ with $(\gamma_n)$ so that $\gamma_n$ dominates $\widetilde{Z}_{nk_0}$ for any $n\geq 0$. Proposition \ref{4p1} implies that $\E(|\widetilde{Z}_{k_0}|)$ is strictly less than $1$ for $R$ large.  The classical theory of branching process then tells us that $|\gamma_n|=0$ for $n$ large a.s., giving $|\widetilde{Z}_{nk_0}|=0$ for $n$ large. Hence with probability one, $\cup_{k=0}^\infty S(\widetilde{Z}_k)$ is compact and so is $\cup_{k=0}^\infty \eta_k$ by Lemma \ref{4l1.5}. It follows from \eqref{45equiv} that the SIR epidemic $(\eta_n)$ becomes extinct, so percolation does not occur a.s.
\end{proof}

 \begin{remark}\label{4r1}
With some appropriate initial condition $\widetilde{Z}_0$, we do conjecture that $X_t^{N_R}=\frac{1}{N_R}\widetilde{Z}_{[tN_R]}$ will converge to a super-Brownian motion with drift $\theta-b_d$ for all $d\geq 4$. An ongoing work of the author \cite{Hong24} proves such convergence of the SIR epidemic processes. However, there is still a gap between $\eta_n$ and $\widetilde{Z}_n$: The difference between $\eta_n$ and $\widetilde{Z}_n$ comes from the events when two or more particles attempting to give birth to the same site at the same time. If we let $\Gamma_{n}(x)$ denote the number of the failed infections at site $x$ and time $n$ in the SIR epidemic $(\eta_n)$, meaning that if $k$ infected individuals simultaneously attempt to infect $x$ at time $n$, then $\Gamma_{n}(x)=(k-1) \vee 0$. One can show as in the proof of Lemma 8.1 in \cite{Hong21} (see also Lemma 9 of \cite{LZ10}) that for a branching random walk starting from a single ancestor at the origin, we have
  \begin{align} 
\E \Big(\sum_{n=1}^{N_R}  \sum_{x\in \Z^d_R}\Gamma_{n}(x)\Big)=o(1),  \text{ in } d=4,
\end{align}
and 
  \begin{align}\label{4e5.66}
\E \Big(\sum_{n=1}^{N_R}  \sum_{x\in \Z^d_R}\Gamma_{n}(x)\Big)=O(1), \text{ in } d\geq 5.
\end{align}
So one would expect that our lower bound for $p_c$ should be sharp in $d=4$ while in $d\geq 5$, the interesting gap between $b_d$ and $\widetilde{b_d}$ appears due to \eqref{4e5.66}.
\end{remark}

It remains to present a proof of Proposition \ref{4p1}, which will take up the rest of the paper. In Section \ref{4s2}, by introducing a new labeling system for the branching particle system, we find an appropriate upper bound for $\E(|\widetilde{Z}_{k}|)$ and finish the proof of Proposition \ref{4p1} by assuming a technical lemma (Lemma \ref{4l2.1}) that gives the convergence of the collision term. The proof of Lemma \ref{4l2.1} is then given in Section \ref{4s3} by three intermediate lemmas while Sections \ref{4s4}, \ref{4s5}, \ref{4s6} are devoted to the proofs of those three lemmas.

\section{Moment bounds for the self-avoiding branching random walk} \label{4s2}

We will prove Proposition \ref{4p1} that gives the first moment bound for the self-avoiding branching random walk $\widetilde{Z}$. To do so, we introduce a new labeling system for our particle system. Let \[\cI=\bigcup_{n=0}^\infty  \{0\}\times \{1,\cdots, V(R)\}^n,\] where we use $0$ to denote the ancestor located at the origin. We collect various notations for the labeling system that will be used frequently below:
\begin{itemize}
\item If $\beta=(\beta_0, \beta_1, \cdots, \beta_n)\in \cI$, we set $|\beta|=n$ to be the generation of $\beta$. 
\item Write $\beta|k=(\beta_0, \cdots, \beta_k)$ for each $0\leq k\leq |\beta|$. 
\item For each $|\beta|=n$ with some $n\geq 1$, let $\pi \beta=(\beta_0, \beta_1, \cdots, \beta_{n-1})$ be the parent of $\beta$ and set $\beta \vee i=(\beta_0, \beta_1, \cdots, \beta_n, i)$ to be the $i$-th offspring of $\beta$ for $1\leq i\leq V(R)$.
\item Write $ \beta\geq \gamma$ if $\beta$ is a descendant of $\gamma$ and $\beta>\gamma$ if it is strict.
\item For each $\beta, \gamma \in \cI$, let $k_{max}=\max\{0\leq k\leq |\beta|\wedge |\gamma|: \beta|k=\gamma|k\}$ and define $\gamma\wedge \beta=\beta|k_{max}=\gamma|k_{max}$ to be the most recent common ancestor of $\beta$ and $\gamma$. 
\end{itemize}

Let $\{{W}^{\beta \vee i}, 1\leq i\leq V(R)\}_{\beta \in \cI}$ be a collection of i.i.d. random vectors, each uniformly distributed on  $\cN(0)^{(V(R))}=\{(x_1,\cdots, x_{V(R)}): \{x_i\} \text{ all distinct}\}$. Let $\{{B}^\beta: \beta \in \cI, |\beta|>0\}$ be i.i.d. Bernoulli random variables with parameter $p(R)$ indicating whether the birth of the offspring particle $\beta$ from its parent $\pi \beta$ to  is valid. Let $\{B^{\beta}\}$ and $\{W^\beta\}$ be mutually independent. Define the above independent collections of random variables on some complete probability space $(\Omega, \cF, \P)$.

 Write $\beta \approx n$ if $|\beta|=n$ and ${B}^{\beta|i}=1$ for all $1\leq i\leq n$. The historical path of a particle $\beta$ alive at time $|\beta|$ would be ${Y}_k^\beta= \sum_{i=1}^{|\beta|} 1(i\leq k) W^{\beta|i}$ for $k\geq 0$, and we denote its current location by
\begin{align}\label{5e7.31}
{Y}^\beta=
\begin{cases}
 \sum_{i=1}^{|\beta|}  W^{\beta|i}, &\text{ if } \beta\approx |\beta|,\\
\Delta, &\text{ otherwise. }
\end{cases}
\end{align}
 Set
\begin{align}\label{5e1.24}
\zeta_\beta^0:=\inf\{1\leq m\leq |\beta|: B^{\beta|m}=0\}\wedge (|\beta|+1),
\end{align}
where by convention $\inf \emptyset=\infty$. One can easily check $\zeta_\beta^0>|\beta|$ iff $Y^\beta\neq \Delta$. For each particle $\beta \in \cI$, we denote by $\cH_\beta$ the $\sigma$-field of all the events in the family line of $\beta$ before time $|\beta|$, which is given by
\begin{align}\label{5e11.24}
\cH_\beta=\sigma\{B^{\beta\vert m}, W^{\beta\vert m}: m\leq \vert \beta\vert \}.
\end{align}
Then $Y^\beta\in \cH_\beta$ and $\zeta_\beta^0\in \cH_\beta$. For each $n\geq 1$, define
\begin{align*}
\cG_n=\bigvee_{|\beta|\leq n}\cH_\beta
\end{align*}
to be the $\sigma$-field of all the events before time $n$.

For any function $\phi$, we define
\begin{align}\label{5e1.4}
Z_n(\phi)=\sum_{|\beta|=n} \phi(Y^\beta) 1_{\{\zeta_\beta^0>|\beta|\}}
\end{align}
so that $(Z_n)$ gives the same distribution of the branching random walk $Z$ as in \eqref{4eb2.21} where in generation $n$, each particle gives birth to one offspring to its $V(R)$ neighboring positions independently with probability $p(R)$.

Next, we will use the new labeling system to rewrite the self-avoiding BRW $\widetilde{Z}$ from \eqref{4eb2.24}. To do so, we set
\begin{align}\label{5e11.4}
\zeta_\beta^1=\zeta_\beta^1(\widetilde{Z})=\inf\{1\leq m\leq |\beta|: Y^{\beta|m} \in S(\sum_{k=0}^{m-1} \widetilde{Z}_{k}) \text{ or } B^{\beta|m}=0 \}\wedge (|\beta|+1).
\end{align}
 In this way, the event $\zeta_\beta^1>|\beta|$ is equivalent to $Y^\beta\neq \Delta$ and $Y^{\beta|m} \notin S(\sum_{k=0}^{m-1} \widetilde{Z}_{k})$ for all $1\leq m\leq |\beta|$, that is, the particle $\beta$ is alive at time $|\beta|$ and it never collides with $\widetilde{Z}$ up to time $|\beta|$. 
So the self-avoiding BRW $\widetilde{Z}$ from \eqref{4eb2.24} can be rewritten by
\[
\widetilde{Z}_n(\phi)=\sum_{|\beta|=n} \phi(Y^\beta) 1_{\{\zeta_\beta^1>|\beta|\}}.
\]
Since we start with only one ancestor at the origin, one can check that the above definition is well-given and the existence and uniqueness for the law of $\widetilde{Z}_n$ also follows.

 For any $n\geq 0$, one can check that
\begin{align*}
\widetilde{Z}_{n+1}(1)=&\sum_{|\beta|=n+1} 1_{\{\zeta_\beta^1>|\beta|\}}=\sum_{|\beta|=n}\sum_{i=1}^{V(R)}  1_{\{\zeta_{\beta\vee i}^1>|{\beta\vee i}|\}}\\
=&\sum_{|\beta|=n} 1_{\{\zeta_\beta^1>|\beta|\}} \sum_{i=1}^{V(R)}  B^{\beta \vee i} \cdot 1{\{Y^\beta+W^{\beta \vee i} \notin S(\sum_{k=0}^{n} \widetilde{Z}_{k})\}},
\end{align*}
where the last equality uses the definition of $\zeta_{\beta\vee i}^1$ and $Y^{\beta\vee i}=Y^\beta+W^{\beta \vee i}$. To simplify notation, we set
\begin{align*}
R_n=R_n(\widetilde{Z}):=S\Big(\sum_{k=0}^{n} \widetilde{Z}_{k}\Big).
\end{align*}
Recall $\widetilde{Z}_n(1)=\sum_{|\beta|=n} 1_{\{\zeta_\beta^1>|\beta|\}}$. It follows that 
\begin{align*} 
&\widetilde{Z}_{n+1}(1)-\widetilde{Z}_{n}(1)=\Big[\widetilde{Z}_{n+1}(1)-(1+\frac{\theta}{N_R}) \widetilde{Z}_{n}(1)\Big]+\frac{\theta}{N_R} \widetilde{Z}_{n}(1)\nn\\
=&\sum_{|\beta|=n} 1_{\{\zeta_\beta^1>|\beta|\}} \sum_{i=1}^{V(R)} \Big[ B^{\beta \vee i}1{\{Y^\beta+W^{\beta \vee i} \notin R_n\}}- \frac{1+\frac{\theta}{N_R}}{V(R)}\Big]+\frac{\theta}{N_R} \widetilde{Z}_{n}(1)\nn\\
=&\sum_{|\beta|=n} 1_{\{\zeta_\beta^1>|\beta|\}} \sum_{i=1}^{V(R)} \Big[ B^{\beta \vee i}1{\{Y^\beta+W^{\beta \vee i} \notin R_n\}}- B^{\beta \vee i}\Big]\nn\\
&\quad +\sum_{|\beta|=n} 1_{\{\zeta_\beta^1>|\beta|\}} \sum_{i=1}^{V(R)} [B^{\beta \vee i} -p(R)] +\frac{\theta}{N_R} \widetilde{Z}_{n}(1).
\end{align*}
Now take expectation and use that $B^{\beta \vee i}$ with $|\beta|=n$ is Bernoulli with parameter $p(R)$ independent of $\cG_n \vee \sigma(W^{\beta\vee i})$ to arrive at
\begin{align}\label{5e1.05}
\E(\widetilde{Z}_{n+1}(1)-\widetilde{Z}_{n}(1))= -p(R)\cdot \E\Big(\sum_{|\beta|=n} &1_{\{\zeta_\beta^1>|\beta|\}}  \sum_{i=1}^{V(R)}  1{\{Y^\beta+W^{\beta \vee i} \in R_n\}}  \Big)\nn\\
&+\frac{\theta}{N_R} \E(\widetilde{Z}_{n}(1)).
\end{align}
 For each $|\beta|=n$ and $1\leq i\leq V(R)$, we may condition on $\cG_n$ to see
\begin{align}\label{5e1.20}
  \E(1_{\{Y^\beta+W^{\beta \vee i} \in R_n\}}|\cG_n)=& \sum_{k=1}^{V(R)}1_{\{Y^\beta+e_k\in R_{n}\}} \E\Big(1_{W^{\beta \vee i}=e_k} \Big|\cG_n\Big)\nn\\
=&\frac{1}{V(R)}\sum_{k=1}^{V(R)}1_{\{Y^\beta+e_k\in R_{n}\}}:=\frac{1}{V(R)} \nu(\beta),
\end{align}
where $\nu(\beta)$ counts the number of neighbours of $Y^\beta$ that lie in $R_n$. So \eqref{5e1.05} becomes
\begin{align}\label{5e1.5}
\E(\widetilde{Z}_{n+1}(1)-\widetilde{Z}_{n}(1))=&\frac{\theta}{N_R} \E(\widetilde{Z}_{n}(1))- p(R)\E\Big(\sum_{|\beta|=n} 1_{\{\zeta_\beta^1>|\beta|\}} \sum_{i=1}^{V(R)} \frac{1}{V(R)}   \nu(\beta)\Big)\nn\\
=&\frac{\theta}{N_R} \E(\widetilde{Z}_{n}(1))- p(R)\E\Big(\sum_{|\beta|=n} 1_{\{\zeta_\beta^1>|\beta|\}}  \nu(\beta)\Big).
\end{align}
Summing the above for $0\leq n\leq [N_R]$, we get
\begin{align}\label{5e1.6}
\E(\widetilde{Z}_{[N_R]+1}(1))-1&=\frac{\theta}{N_R} \sum_{n=0}^{[N_R]} \E(\widetilde{Z}_{n}(1))-p(R) \E\Big(\sum_{n=0}^{[N_R]}\sum_{|\beta|=n} 1_{\{\zeta_\beta^1>|\beta|\}}  \nu(\beta) \Big).
\end{align}

Next, recall that 
\begin{align*}
R_{n}=S\Big(\sum_{k=0}^{n} \widetilde{Z}_{k}\Big)=\{Y^\gamma: |\gamma|\leq n, \zeta_{\gamma}^{1}>|\gamma|\}.
\end{align*} 
One may use the above to rewrite $\nu(\beta)$ (for $|\beta|=n$)  as 
\begin{align}\label{5e1.36}
\nu(\beta)=\sum_{k=1}^{V(R)}1_{\{Y^\beta+e_k\in R_{n}\}}=\Big|\{Y^\gamma: |\gamma|\leq |\beta|, \zeta_{\gamma}^{1}>|\gamma|, Y^\beta-Y^\gamma\in \cN(0)\}\Big|.
\end{align}

For each $R>0$, define the cutoff time $\{\tau_R \}$ to be\footnote{The definitions of \eqref{5e2.10} and \eqref{5e3.1} are inspired by Section 5 of \cite{DP99} where the two authors claim that ``collisions between distant relatives can be ignored''. The difference between $\nu(\beta)$ and $\nu_{\tau}(\beta)$ as above can also be ignored, but since we only need an upper bound for \eqref{5e1.6}, we simply replace $\nu(\beta)$ by $\nu_{\tau}(\beta)$.}
\begin{align}\label{5e2.10}
\begin{cases} 
\tau_R \in \N, \tau_R \to \infty, \tau_R/N_R \to 0, &\text{ in } d\geq 5;\\
\tau_R=[N_R/\log N_R], &\text{ in } d=4,
\end{cases}  
\end{align}
and set
 \begin{align}\label{5e3.1}
\nu_{\tau}(\beta)=\Big|\{Y^\gamma: |\gamma|\leq |\beta|, \zeta_{\gamma}^1>|\gamma|, Y^\beta-Y^\gamma\in \cN(0), |\gamma \wedge \beta|> |\beta|-\tau_R \}\Big|.
\end{align}

It is clear that  $\nu(\beta)\geq \nu_{\tau}(\beta)$ and $p(R)\geq 1/V(R)$. Delete the sum for $n<\tau_R$ in the second term of \eqref{5e1.6} to see
\begin{align}\label{5e1.7}
\E(\widetilde{Z}_{[N_R]+1}(1))-1\leq & \frac{\theta}{N_R} \sum_{n=0}^{[N_R]} \E(\widetilde{Z}_{n}(1))-\E\Big( \frac{1}{V(R)}\sum_{n=\tau_R}^{[N_R]}\sum_{|\beta|=n} 1_{\{\zeta_\beta^1>|\beta|\}}  \nu_\tau(\beta) \Big).
\end{align}
Define
\begin{align}\label{5e1.8}
K^1_R:= \frac{1}{V(R)}\sum_{n=\tau_R}^{[N_R]}\sum_{|\beta|=n} 1_{\{\zeta_\beta^1>|\beta|\}}  \nu_\tau(\beta) .
\end{align}
We will show that 
\begin{lemma}\label{4l2.1}
\begin{align}\label{5e1.9}
\lim_{R\to \infty} \Big|\E\Big(K^1_R-\frac{b_d}{N_R} \sum_{n=0}^{[N_R]-\tau_R} \widetilde{Z}_{n}(1)\Big)\Big|=0.
\end{align}
\end{lemma}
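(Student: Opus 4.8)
The plan is to expand $\E(K^1_R)$ into a sum over ordered pairs of spines and to recognise $b_d$ as a random-walk return probability, via a many-to-two computation together with a local central limit theorem.

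\textbf{Pair expansion.} First I would rewrite the inner expectation $\E\big(\sum_{|\beta|=n}1_{\{\zeta^1_\beta>|\beta|\}}\,\nu_\tau(\beta)\big)$ as a sum over ordered pairs $(\beta,\gamma)$ with $|\beta|=n$, $|\gamma|=m\le n$, sharing a most recent common ancestor at generation $j:=|\gamma\wedge\beta|>n-\tau_R$, both alive and with $Y^\beta-Y^\gamma\in\cN(0)$. Set $a=n-j$, $b=m-j$ (so $0\le b\le a<\tau_R$ and $k:=a+b$). Because the two lineages evolve independently after the split, for the \emph{free} branching random walk $Z$ the expected number of such pairs factorises as $(p(R)V(R))^{\,n+m-j}\,\P\big(Y^\beta-Y^\gamma\in\cN(0)\big)$, up to the $O(1/V(R))$ correction from the ``all-distinct'' coupling of the two first steps out of the common ancestor. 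Using $\E(Z_n(1))=(p(R)V(R))^n$ and $(p(R)V(R))^{b}=(1+\theta/N_R)^{b}\to1$ uniformly over $b<\tau_R$ (here $\tau_R/N_R\to0$ is used), the inner sum reduces to $\E(Z_n(1))\sum_{0\le b\le a<\tau_R}\P(Y^\beta-Y^\gamma\in\cN(0))$; the lower limit $n\ge\tau_R$ in $K^1_R$ is precisely what makes the constraint $a\le n$ inactive, so the split-sum over $(a,b)$ is complete.

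\textbf{Local CLT and the constant.} After cancelling the common ancestral displacement, $Y^\beta-Y^\gamma$ is a sum of $k=a+b$ i.i.d.\ steps uniform on $\cN(0)$. A local CLT on $\tfrac1R\Z^d$ --- each lattice site carrying cell volume $R^{-d}$ and a uniform step on $\cN(0)$ rescaling to uniform on $[-1,1]^d$ --- gives $\P(Y^\beta-Y^\gamma\in\cN(0))\to\P(U_k\in[-1,1]^d)$. The number of admissible splits $(a,b)$ with $0\le b\le a$ and $a+b=k$ is exactly $\big[\tfrac{k+2}{2}\big]$, so the inner split-sum converges to $\sum_{k\ge1}\big[\tfrac{k+2}{2}\big]\P(U_k\in[-1,1]^d)$. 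The decisive bookkeeping is that the prefactor obeys $1/V(R)=2^{-d}N_R^{-1}(1+o(1))$, since $V(R)=(2R+1)^d-1\sim 2^dR^d=2^dN_R$ for $d\ge5$; this supplies exactly the $2^{-d}$ in $b_d$. Combining, $\E(K^1_R)\approx \tfrac1{N_R}\sum_n\E(Z_n(1))\cdot 2^{-d}\sum_k\big[\tfrac{k+2}{2}\big]\P(U_k\in[-1,1]^d)=\tfrac{b_d}{N_R}\sum_n\E(Z_n(1))$, and the range mismatch between $\sum_{n=\tau_R}^{[N_R]}$ and $\sum_{n=0}^{[N_R]-\tau_R}$ costs only $O(\tau_R/N_R)=o(1)$ since each $\E(Z_n(1))\le(1+\theta/N_R)^{N_R}=O(1)$.

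\textbf{The reductions to be proved.} These heuristics must be upgraded to the three lemmas of Section~\ref{4s3}, each controlling an error that is itself a collision quantity and hence $o(1)$: (i) replacing the self-avoiding survival indicators $1_{\{\zeta^1>\cdot\}}$ and the self-avoiding range $R_n$ by their free-BRW counterparts, which also justifies $\E(Z_n(1))\approx\E(\widetilde Z_n(1))$ (the discrepancy is a ``collision of a collision'', bounded by the first-moment collision estimate from the proof of Lemma~8.1 of \cite{Hong21} and Lemma~9 of \cite{LZ10}); (ii) the set-versus-multiset discrepancy in $\nu_\tau$, which counts distinct locations $Y^\gamma$, where $\nu_\tau(\beta)\le\sum_\gamma 1_{\{\cdots\}}$ gives the upper bound at once while the matching lower bound costs a ``two relatives at one site'' term, again a collision; and (iii) the truncation errors from passing from $\nu$ to $\nu_\tau$ and from the $k>\tau_R$ tail, which are summable and negligible when $d\ge5$.

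\textbf{Main obstacle ($d=4$).} The hard case is $d=4$, where $\sum_k\big[\tfrac{k+2}{2}\big]\P(U_k\in[-1,1]^4)$ \emph{diverges}, so $b_4$ cannot be read off a convergent series; instead it emerges from the logarithmic growth of the truncated sum. By the local CLT, $k^2\,\P(U_k\in[-1,1]^4)\to 2^4\,(2\pi/3)^{-2}=36/\pi^2$, so $\sum_{k\le\tau_R}\big[\tfrac{k+2}{2}\big]\P(U_k\in[-1,1]^4)\sim\tfrac{18}{\pi^2}\log\tau_R\sim\tfrac{18}{\pi^2}\cdot4\log R=\tfrac{72}{\pi^2}\log R$; together with $1/V(R)\sim 2^{-4}R^{-4}$ and $N_R=R^4/\log R$ this yields precisely $b_4=9/(2\pi^2)$. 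The technical heart is therefore a local CLT for $U_k$ with error bounds uniform in $k\le\tau_R$ (so the logarithmic constant is captured exactly), the tuning $\tau_R=[N_R/\log N_R]$ which guarantees $\log\tau_R\sim\log N_R\sim4\log R$, and showing that the free-BRW replacement and the collision corrections remain $o(1)$ even though the main term is now an $O(1)$ quantity assembled out of a logarithm.
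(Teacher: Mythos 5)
Your pair expansion and local CLT computation correctly identify the constant (the count $\big[\tfrac{k+2}{2}\big]$ of splits, the factor $2^{-d}$ from $N_R/V(R)$, and the $d=4$ logarithm yielding $9/(2\pi^2)$ all match the paper's Section \ref{4s4}), and your reduction (ii) for multiple occupancy is essentially the paper's Lemma \ref{4l2.2}. But reduction (i) contains a genuine gap that breaks the argument. Your main term is computed for the \emph{free} BRW, giving $\E(K_R^1)\approx\frac{b_d}{N_R}\sum_n \E(Z_n(1))$, and you then claim $\E(Z_n(1))\approx\E(\widetilde Z_n(1))$ is a negligible ``collision of a collision.'' It is not: writing $D_n=\E(Z_n(1))-\E(\widetilde Z_n(1))$, the recursion \eqref{5e1.5} gives $D_{n+1}=(1+\tfrac{\theta}{N_R})D_n+C_n$ with $C_n=p(R)\E\big(\sum_{|\beta|=n}1_{\{\zeta^1_\beta>|\beta|\}}\nu(\beta)\big)$, and the content of the lemma itself is that $\sum_{n\le N_R}C_n$ is of order $b_d$ (a constant), so $D_n\asymp b_d\, n/N_R$ and $\frac{b_d}{N_R}\sum_{n\le N_R}D_n\asymp b_d^2/2$. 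In other words, the discrepancy between the free and self-avoiding masses is a \emph{first-order} collision effect, of exactly the same magnitude as the quantity the lemma computes; if it were negligible, the collision term could not produce the macroscopic drift correction that makes Proposition \ref{4p1} (and the whole theorem) work. The estimates you cite from Remark \ref{4r1} do not help here: they bound simultaneous births onto the same site (the gap between $\eta$ and $\widetilde Z$), not births onto previously visited sites (the gap between $Z$ and $\widetilde Z$).

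The paper's proof is structured precisely to avoid this trap. Each contributing spine $\beta$ with $|\beta|=n$ is split at generation $n-\tau_R$: the ancestor $\alpha=\beta|(n-\tau_R)$ \emph{retains} the full self-avoiding indicator $1_{\{\zeta^1_\alpha>n-\tau_R\}}$, and only the constraints inside the final window of length $\tau_R$ are relaxed (Lemma \ref{4l2.3}); that replacement error is genuinely second order because both the $(\beta,\gamma)$-collision and the extra collision event must occur, with the relaxation confined to the short window. The Markov property and translation invariance then factor $\E(K_R^3)$ \emph{exactly} as $\frac{b_d^{\tau_R}}{N_R}\E\big(\sum_{n=0}^{[N_R]-\tau_R}\widetilde Z_n(1)\big)$ in \eqref{4e3.10} --- the self-avoiding mass appears because the ancestors kept their constraint --- and your many-to-two/LCLT computation is then applied only to the free-BRW quantity $b_d^{\tau_R}=\E(\cU_0(\tau_R))$ over $\tau_R$ generations (Lemma \ref{4l2.4}). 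Without this conditional factorization at time $n-\tau_R$, your route produces $\frac{b_d}{N_R}\sum_n\E(Z_n(1))$, which differs from the lemma's target by a non-vanishing constant.
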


Assuming the above lemma, we may finish the proof of Proposition \ref{4p1}.
\begin{proof}[Proof of Proposition \ref{4p1} ]
Set $\eps_0=\frac{1}{4} \wedge \frac{b_d-\theta}{2}>0$. By \eqref{5e2.10} we have 
\begin{align}\label{5e2.2}
\frac{\tau_R}{N_R}\leq \frac{1}{2} \wedge \frac{\eps_0}{4b_d e^{b_d}}, \text{  if $R$ is large.}
\end{align}
Next, Lemma \ref{4l2.1} implies that for $R$ large, 
\begin{align}\label{5e2.0}
\E(K^1_R)\geq \frac{b_d}{N_R} \sum_{n=0}^{[N_R]-\tau_R} \E(\widetilde{Z}_{n}(1))-\frac{\eps_0}{4}.
\end{align}
It follows from \eqref{5e1.7}, \eqref{5e1.8} and the above that
\begin{align}\label{5e2.1}
\E(\widetilde{Z}_{[N_R]+1}(1))-1\leq & \frac{\theta-b_d}{N_R}  \sum_{n=0}^{[N_R]-\tau_R}\E(\widetilde{Z}_{n}(1))+\frac{\eps_0}{4}+\frac{\theta}{N_R} \sum_{n=[N_R]-\tau_R+1}^{[N_R]} \E(\widetilde{Z}_{n}(1))\nn\\
\leq& \frac{-2\eps_0}{N_R}  \sum_{n=0}^{[N_R]-\tau_R}\E(\widetilde{Z}_{n}(1))+\frac{\eps_0}{4}+ \frac{b_d }{N_R}  \tau_R e^{b_d}\nn\\
\leq&\frac{-2\eps_0}{N_R}  \sum_{n=0}^{[N_R]-\tau_R}\E(\widetilde{Z}_{n}(1))+\frac{\eps_0}{2},
\end{align}
where the second inequality uses $b_d-\theta \geq 2\eps_0>0$ and  $\E(\widetilde{Z}_{n}(1))\leq \E({Z}_{n}(1))=(1+\frac{\theta}{N_R})^n  \leq e^\theta\leq e^{b_d}$. The last inequality is by \eqref{5e2.2}.

If there exists some $1\leq n\leq [N_R]$ such that $\E(\widetilde{Z}_{n}(1))\leq 1-\eps_0$, then we are done. If $\E(\widetilde{Z}_{n}(1))\geq 1-\eps_0$ for all $0\leq n\leq [N_R]$, then \eqref{5e2.1} becomes
\begin{align} 
\E(\widetilde{Z}_{[N_R]+1}(1))-1\leq&  {-2\eps_0}  (1-\eps_0)+\frac{\eps_0}{2} \leq - {\eps_0},
\end{align}
where the last inequality uses $\eps_0\leq 1/4$. So the above implies $\E(\widetilde{Z}_{[N_R]+1}(1))\leq 1- {\eps_0}$ as required. In either case, the conclusion of Proposition \ref{4p1} holds. The proof is now complete.
 \end{proof}
It remains to prove Lemma \ref{4l2.1}.

\section{Convergence of the collision term} \label{4s3}

In this section, we will give the proof of Lemma \ref{4l2.1} in three steps. The number $\nu_{\tau}(\beta)$ defined as in \eqref{5e3.1} counts the number of sites that have been occupied in the neighborhood of $Y^\beta$ by its close relatives, which might have been visited more than once. However, one would expect such events to be rare. So we define
\begin{align}\label{e31.60}
\text{nbr}_{\beta, \gamma}(\tau)= 1(|\gamma|\leq |\beta|, Y^\beta-Y^\gamma\in \cN(0), |\gamma \wedge \beta|> |\beta|-\tau_R),
\end{align}
and set
\begin{align}\label{4e3.60}
K^2_R:=&   \frac{1}{V(R)}   \sum_{n=\tau_R}^{[N_R]} \sum_{|\beta|=n} 1_{\{\zeta_\beta^1>|\beta|\}}\sum_{\gamma}  1_{\zeta_{\gamma}^1>|\gamma|} \text{nbr}_{\beta, \gamma}(\tau).
\end{align}
 The lemma below shows that the difference between $K_R^1$ and $K_R^2$ can be ignored. The proof is deferred to Section \ref{4s5}.

\begin{lemma}\label{4l2.2}
$\lim_{R\to \infty} \E(|K^1_R-K_R^2|)=0$.
\end{lemma}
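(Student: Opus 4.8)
The plan is to exploit the fact that $K^2_R$ and $K^1_R$ count the same collection of occupied neighbouring sites in two different ways: in \eqref{4e3.60}, $K^2_R$ weights each qualifying relative $\gamma$ by one, whereas in \eqref{5e1.8}, $K^1_R$ is built from $\nu_\tau(\beta)$ of \eqref{5e3.1}, which records only the \emph{distinct} occupied positions $Y^\gamma$. Consequently $K^2_R\geq K^1_R$ pointwise, so $\E(|K^1_R-K^2_R|)=\E(K^2_R-K^1_R)$ and it suffices to control the expected over-counting. Fixing $\beta$ with $|\beta|=n$ and grouping the qualifying $\gamma$'s by their common location $z\in\cN(Y^\beta)$, write $m_z(\beta)$ for the number of $\gamma$ with $\zeta_\gamma^1>|\gamma|$, $Y^\gamma=z$, $|\gamma|\le n$ and $|\gamma\wedge\beta|>n-\tau_R$; then $\nu_\tau(\beta)=\#\{z:m_z(\beta)\geq 1\}$ while the inner sum of $K^2_R$ equals $\sum_z m_z(\beta)$, so the excess at $\beta$ is $\sum_z (m_z(\beta)-1)^+\leq \sum_z\binom{m_z(\beta)}{2}$. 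The right-hand side is exactly the number of unordered pairs $\{\gamma_1,\gamma_2\}$ of distinct qualifying particles landing at a common neighbour of $Y^\beta$, which converts the problem into a three-particle moment estimate.

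First I would discard the self-avoidance constraint. Since $\zeta^1_\cdot\leq \zeta^0_\cdot$ by \eqref{5e1.24} and \eqref{5e11.4}, we have $1_{\{\zeta^1_\cdot>|\cdot|\}}\leq 1_{\{\zeta^0_\cdot>|\cdot|\}}=1_{\{Y^\cdot\neq\Delta\}}$, so dropping the indicators $1_{\{\zeta^1>|\cdot|\}}$ (which only removes particles) bounds the excess by the same functional evaluated on the plain branching random walk $Z$ of \eqref{5e1.4}. This decouples the estimate from $\widetilde Z$ entirely and leaves a quantity of the form
\[
\E(K^2_R-K^1_R)\leq \frac{1}{V(R)}\sum_{n=\tau_R}^{[N_R]}\sum_{|\beta|=n}\sum_{\{\gamma_1,\gamma_2\}}\E\Big[1_{\{Y^{\gamma_1}\neq\Delta\}}1_{\{Y^{\gamma_2}\neq\Delta\}}\,1\big(Y^{\gamma_1}=Y^{\gamma_2}\in\cN(Y^\beta),\ |\gamma_1\wedge\beta|>n-\tau_R,\ |\gamma_2\wedge\beta|>n-\tau_R\big)\Big],
\]
which is now a pure moment computation for the supercritical BRW.

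The core step is the genealogical decomposition of this triple sum. I would classify the pairs according to the generations $|\gamma_1\wedge\gamma_2|$, $|\gamma_1\wedge\beta|$, $|\gamma_2\wedge\beta|$ and the few topological cases (both $\gamma_i$ branching off the ancestral line of $\beta$, one $\gamma_i$ lying on that line, or one $\gamma_i$ a descendant of the other). In each case, conditioning on $\cG$ up to the relevant splitting generations makes the residual displacements of $\beta,\gamma_1,\gamma_2$ after they separate into independent sums of i.i.d. uniform steps; the neighbour event $\{Y^{\gamma_1}\in\cN(Y^\beta)\}$ and the collision event $\{Y^{\gamma_1}=Y^{\gamma_2}\}$ then reduce, via the local central limit theorem, to local probabilities of the form $\P(U_\ell\in[-1,1]^d)\asymp c_d\,\ell^{-d/2}$ (giving a factor $\asymp \ell^{-d/2}$ for each neighbour constraint) and $\P(U_{\ell'}=0)\asymp c_d\,R^{-d}\ell'^{-d/2}$ (for the collision), where $\ell,\ell'$ count the independent steps separating the relevant particles. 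The branching weights contribute only $O(1)$ per generation because $p(R)V(R)\le e^{\theta}$.

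The main obstacle, and the reason the statement is delicate, is carrying out this summation so that the total is $o(1)$, in particular in the borderline dimension $d=4$. Writing $s=n-|\gamma_i\wedge\beta|<\tau_R$ for the recency of the split from $\beta$ and $p,q$ for the numbers of post-split steps of $\gamma_1,\gamma_2$, the leading (simultaneous-split) configuration contributes, after summing over the labels at generation $n$, a quantity of order $R^{-d}\sum_{s<\tau_R}\sum_{p,q\geq 0}(p+s)^{-d/2}(p+q)^{-d/2}$. The extra collision factor $(p+q)^{-d/2}$ makes this double sum convergent, in contrast to the non-collision sum producing $b_d$ in Lemma \ref{4l2.1}, which in $d=4$ is only logarithmically divergent and is tamed there by the cutoff $\tau_R=[N_R/\log N_R]$; hence the contribution is $O(R^{-d})$ for each generation $n$. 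Summing this $O(R^{-d})$ bound over the $O(N_R)$ relevant generations and multiplying by the prefactor $V(R)^{-1}$ yields $\E(K^2_R-K^1_R)=O\big(N_R/(V(R)R^d)\big)$, which tends to $0$ for $4\le d\le 6$ by the choices of $N_R$ in \eqref{6e1.3} and $V(R)=(2R+1)^d-1$. The remaining genealogical configurations (one of $\gamma_1,\gamma_2$ lying on the ancestral line of $\beta$, or one a descendant of the other) are treated by the same local-CLT return estimates and are of the same or smaller order, so the entire excess vanishes as $R\to\infty$.
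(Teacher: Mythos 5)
Your reduction --- observing that $K^2_R\ge K^1_R$ pointwise, bounding the excess at each site by pairs of distinct particles landing on a common neighbour of $Y^\beta$, and then discarding the self-avoidance constraints so that everything becomes a three-particle moment bound for the free branching random walk with the branching weights absorbed into $e^{c\theta}$ --- is correct and is exactly how the paper's proof begins (its quantity $J_1$, with ordered rather than unordered pairs). The gap lies in the core estimate that follows. The product $(p+s)^{-d/2}\cdot R^{-d}(p+q)^{-d/2}$ cannot be obtained by conditioning, because the neighbour event $\{Y^{\gamma_1}-Y^\beta\in\cN(0)\}$ and the collision event $\{Y^{\gamma_1}=Y^{\gamma_2}\}$ both involve the same $p$ post-split increments of $\gamma_1$. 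What conditioning actually yields is either $Cs^{-d/2}\cdot V(R)^{-1}(p+q)^{-d/2}$ (condition on $\cH_{\gamma_1}\vee\cH_{\gamma_2}$; this is the paper's choice, cf.\ \eqref{4e1.22}--\eqref{4e1.23}) or $C(p+s)^{-d/2}\cdot V(R)^{-1}(1+q)^{-d/2}$, but never decay in $p$ in both factors at once. As a bound for unconstrained $p,q,s$ your estimate is in fact false: for $s=q=1$ and $p$ large the joint probability is of order $V(R)^{-1}p^{-d/2}$ (the collision already forces $Y^{\gamma_1}=Y^{\gamma_2}$ to lie within distance one of $Y^\beta$ up to constants, so the neighbour event costs only a constant), not $V(R)^{-1}p^{-d}$. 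Your sum $\sum_{p,q\ge 0}$ never invokes the constraint $p,q\le s$ forced by $|\gamma_i|\le|\beta|$, which is the only thing that could make $(p+s)^{-d/2}\asymp s^{-d/2}$ and rescue that display.

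More seriously, the configuration you dismiss as ``of the same or smaller order'' is the dominant one and is where the real work in the paper lies: the case where $\gamma_1$ and $\gamma_2$ branch off the ancestral line of $\beta$ at two \emph{different} generations (the paper's Case (i)). There the earlier split time is an additional free summation variable of range $\tau_R$, and after the sums over post-split steps are performed, the legitimate factorizations leave no decay in it; the paper's bound for this case is $\E(J_1^{(i)})\le C\tau_R$, i.e.\ a per-generation contribution of order $\tau_R I(N_R)/V(R)$, not $O(R^{-d})$. Likewise, even in your simultaneous-split case the collision factor should involve only the increments after $\gamma_1$ and $\gamma_2$ separate from \emph{each other}, so their mutual split time is yet another sum (the paper's index $j$ in Case (ii)), producing $I(N_R)^2$ rather than a constant. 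Consequently your claimed rate $O\big(N_R/(V(R)R^d)\big)$ is not justified; the correct computation gives the paper's bound $\E(|K^1_R-K^2_R|)\le C(\tau_R+I(N_R))/V(R)$, and the lemma then needs the additional inputs $N_RI(N_R)\le CV(R)$ from \eqref{4e4.55} and $\tau_R/V(R)\to 0$ --- in $d=4$ this is precisely where the choice $\tau_R=[N_R/\log N_R]$ in \eqref{5e2.10} enters. So the architecture of your argument matches the paper's, but the quantitative heart of it, as written, does not hold and must be replaced by the conditional estimates and bookkeeping of Section \ref{4s5}.
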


The second step is to replace $\zeta_\beta^1>|\beta|$ and $\zeta_{\gamma}^1>|\gamma|$ in $K_R^2$ by
\[
\{\zeta_\beta^1> |\beta|-\tau_R, Y^\beta\neq \Delta, \zeta_{\gamma}^1> |\beta|-\tau_R, Y^\gamma\neq \Delta\},
\]
and define
\begin{align}\label{4e5.60}
K_R^{3}:=&   \frac{1}{V(R)}  \sum_{n=\tau_R}^{[N_R]} \sum_{|\beta|=n} 1_{\zeta_\beta^1> |\beta|-\tau_R} \sum_{\gamma}  1_{\zeta_{\gamma}^1> |\beta|-\tau_R}  \text{nbr}_{\beta, \gamma}(\tau).
\end{align}
In the above, $Y^\beta\neq \Delta$ and $Y^\gamma\neq \Delta$ are implicitly given by $Y^\beta-Y^\gamma\in \cN(0)$ in $\text{nbr}_{\beta, \gamma}(\tau)$. The following result will be proved in Section \ref{4s6}.

\begin{lemma}\label{4l2.3}
 $\lim_{R\to \infty} \E(|K^2_R-K_R^3|)=0$.
\end{lemma}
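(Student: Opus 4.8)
The plan is to first show that $K^3_R$ dominates $K^2_R$ term by term, so that the absolute value disappears and only a one-sided difference of expectations must be controlled. On the support of $\text{nbr}_{\beta,\gamma}(\tau)$ one has $|\gamma|\ge |\gamma\wedge\beta|>|\beta|-\tau_R$, so each of the events $\{\zeta_\beta^1>|\beta|\}$ and $\{\zeta_\gamma^1>|\gamma|\}$ appearing in $K^2_R$ is contained in the relaxed event $\{\zeta_\beta^1>|\beta|-\tau_R\}$, respectively $\{\zeta_\gamma^1>|\beta|-\tau_R\}$, used in $K^3_R$; since every summand is nonnegative this gives $K^2_R\le K^3_R$ pointwise and hence $\E(|K^2_R-K^3_R|)=\E(K^3_R)-\E(K^2_R)$. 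Writing $A=\{\zeta_\beta^1>|\beta|\}\subseteq A'=\{\zeta_\beta^1>|\beta|-\tau_R\}$ and $C=\{\zeta_\gamma^1>|\gamma|\}\subseteq C'=\{\zeta_\gamma^1>|\beta|-\tau_R\}$, I would use the elementary identity
\[
1_{A'}1_{C'}-1_A1_C=1_{C'}1_{A'\setminus A}+1_A1_{C'\setminus C}\le 1_{A'\setminus A}+1_{C'\setminus C}
\]
to split the difference into two boundary terms, one measuring the relaxation for $\beta$ and one for $\gamma$.

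The next step is to identify what each boundary event contributes. Because $\text{nbr}_{\beta,\gamma}(\tau)$ forces $Y^\beta-Y^\gamma\in\cN(0)$, both positions are finite, i.e. $\zeta_\beta^0>|\beta|$ and $\zeta_\gamma^0>|\gamma|$, so no death can have occurred; on $A'\setminus A$ (resp. $C'\setminus C$) the only way to have $\zeta^1$ fall in the window $(|\beta|-\tau_R,|\beta|]$ (resp. $(|\beta|-\tau_R,|\gamma|]$) is an honest \emph{collision} of an ancestral position with $\widetilde Z$ at some intermediate generation $m'$. Thus each boundary term counts triples $(\beta,\gamma,\delta)$ in which $\gamma$ is a close relative whose current location neighbours $Y^\beta$, and $\delta\in\widetilde Z$ with $|\delta|<m'$ sits exactly on the ancestral position $Y^{\beta|m'}$. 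I would then drop the self-avoidance indicators (which only increases the count) and pass to the dominating free branching random walk $Z$, for which $\E(Z_n(1))=(1+\theta/N_R)^n\le e^{b_d}$ on $n\le[N_R]$, reducing both terms to sums of random-walk transition probabilities over the branching genealogy.

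The resulting estimate has the schematic shape (main term)$\times$(per-particle collision probability accumulated over a window of length $\tau_R$). The main-term factor is the same $O(1)$ quantity that governs $K^1_R$, while the window factor is the expected number of exact collisions of a single particle with its close relatives over $\tau_R$ generations. The latter is of order $\tau_R/N_R$ in $d\ge 5$, which vanishes since $\tau_R/N_R\to 0$, and after the more delicate logarithmic accounting it is of order $1/\log N_R$ in $d=4$ with the choice $\tau_R=[N_R/\log N_R]$; in both regimes the relevant constants are the random-walk sums $\sum_j\P(U_j\in[-1,1]^d)$ that already define $b_d$. This is what forces $\E(K^3_R)-\E(K^2_R)\to 0$.

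The main obstacle will be the genealogical bookkeeping in the three-particle sum: $\gamma$ and $\delta$ must both be close relatives of $\beta$ with most recent common ancestor within $\tau_R$ generations, and the collision with $\delta$ may occur at any intermediate $m'$ in the window, so one must sum the random-walk return and collision probabilities over all splitting times and the corresponding spatial displacements and show that the total is suppressed by the short window rather than the full time $N_R$. Organizing this sum so that it effectively decouples into the neighbour count and the window-collision count — and controlling the borderline $d=4$ case, where the per-step collision density decays only like the reciprocal of the generation and the $\log$ cancellation must be tracked — is where the genuine work lies; the reduction to the free walk $Z$ is the simplification that makes these moment computations tractable.
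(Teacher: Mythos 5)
Your proposal follows essentially the same route as the paper: the pointwise domination $K_R^2\le K_R^3$, the indicator identity splitting the difference into the two one-sided window events $\{|\beta|-\tau_R<\zeta_\beta^1\le|\beta|\}$ and its analogue for $\gamma$, the interpretation of each window event as a collision $Y^{\beta|i}=Y^\delta$ with an earlier particle $\delta$, and the reduction to free branching random walk moment bounds producing a factor of order $\tau_R I(N_R)/V(R)$ (i.e.\ $\tau_R/N_R$ in $d\ge5$ and $1/\log R$ in $d=4$) are exactly the paper's steps. The genealogical bookkeeping you identify as the remaining work is precisely what the paper carries out through its symmetrization in $\beta,\gamma$ and the case analysis behind $I_1^R$, $I_2^{(1,R)}$, $I_2^{(2,R)}$.
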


Turning to the third step, we notice that  
\begin{align*}
    &\{\zeta_\beta^1> |\beta|-\tau_R, \zeta_{\gamma}^1> |\beta|-\tau_R,|\gamma \wedge \beta|> |\beta|-\tau_R \}\\
    &=\{\zeta_{\beta \wedge \gamma}^1> |\beta|-\tau_R,|\gamma \wedge \beta|> |\beta|-\tau_R \} =\{\zeta_{\beta }^1> |\beta|-\tau_R,|\gamma \wedge \beta|> |\beta|-\tau_R \}.
\end{align*}
Hence we may rewrite $K_R^{3} $ as (recall $\text{nbr}_{\beta, \gamma}(\tau)$ from \eqref{e31.60})
\begin{align*}
K_R^{3}=& \frac{1}{V(R)} \sum_{n=\tau_R}^{[N_R]} \sum_{|\beta|=n} 1_{\{\zeta_\beta^1> |\beta|-\tau_R\}}   \sum_{\gamma} 1_{|\gamma|\leq |\beta|}  1_{Y^\beta-Y^\gamma\in \cN(0)} 1_{|\gamma \wedge \beta|> |\beta|-\tau_R}.
\end{align*}
For each $\beta$ with $|\beta|=n$, we define
\begin{align*}
 F(\beta,n)=\frac{N_R}{V(R)}\sum_{\gamma} 1_{|\gamma|\leq |\beta|}  1_{Y^\beta-Y^\gamma\in \cN(0)} 1_{|\gamma \wedge \beta|> |\beta|-\tau_R}
\end{align*}
so that 
\begin{align}\label{4e11.42}
K_R^{3}=&\frac{1}{N_R}   \sum_{n=\tau_R}^{[N_R]} \sum_{|\beta|=n} 1_{\{\zeta_\beta^1> n-\tau_R\}}     F(\beta,n).
\end{align}

Let $\cA(k)=\{\alpha: |\alpha|=k, \zeta_\alpha^1>k\}$ be the particles alive at time $k$ in $\widetilde{Z}$. Define
\[
\{\alpha\}_n=\{\beta: \beta\geq \alpha, |\beta|=n, \zeta_\beta^0>|\beta|\}
\]
to be the set of descendants $\beta$ of $\alpha$ that are alive at time $n$. Since all the particles $\beta$ contributing in $K_R^{3}$ are descendants of some $\alpha$ in $\cA(n-\tau_R)$, we get \eqref{4e11.42} becomes
\begin{align}\label{4e11.43}
K_R^{3}&= \frac{1}{N_R}   \sum_{n=\tau_R}^{[N_R]} \sum_{\alpha\in \cA(n-\tau_R)}   \sum_{\beta\in \{\alpha\}_n} F(\beta,n).
\end{align}
Set
\[
\cU_{\alpha}(n)=\sum_{\beta\in \{\alpha\}_n} F(\beta,n), \quad \text{ and } \quad b_d^{\tau_R}= \E(\cU_{0}(\tau_R)),
\]
where $0\in \cI$ is the root index located at the origin.
Then 
\begin{align*}
K_R^{3} = \frac{1}{N_R}   \sum_{n=\tau_R}^{[N_R]} \sum_{\alpha\in \cA(n-\tau_R)}  \cU_{\alpha}(n).
\end{align*}
For each $\tau_R\leq n\leq [N_R]$ and $\alpha\in \cA(n-\tau_R)$, if we condition on $\cG_{n-\tau_R}$, then by the Markov property and translation invariance, we get 
\begin{align*}
&\E(\cU_\alpha(n)|\cG_{n-\tau_R})=\E(\cU_{0}(\tau_R))=b_d^{\tau_R}.
\end{align*}
Take expectations in \eqref{4e11.43} and use the above to arrive at
\begin{align}\label{4e3.10}
\E(K_R^{3}) =&\frac{1}{N_R} \E\Big(\sum_{n=\tau_R}^{[N_R]} \sum_{\alpha\in \cA(n-\tau_R)}   b_d^{\tau_R}\Big)=\frac{b_d^{\tau_R}}{N_R} \E\Big(\sum_{n=\tau_R}^{[N_R]} \sum_{|\alpha|=n-\tau_R}  1(\zeta_\alpha^1>n-\tau_R)\Big)\nn\\
=&\frac{b_d^{\tau_R}}{N_R} \E\Big(\sum_{n=\tau_R}^{[N_R]} \widetilde{Z}_{n-\tau_R}(1)\Big)=\frac{b_d^{\tau_R}}{N_R} \E\Big(\sum_{n=0}^{[N_R]-\tau_R} \widetilde{Z}_{n}(1)\Big).
\end{align}
The final piece is the following result, whose proof will be given in Section \ref{4s4}.
\begin{lemma}\label{4l2.4}
 $\lim_{R\to \infty} b_d^{\tau_R}=b_d$.
\end{lemma}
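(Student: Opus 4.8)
The plan is to compute $b_d^{\tau_R}=\E(\cU_0(\tau_R))$ directly. Observe first that $\cU_0(\tau_R)$ is a functional of the \emph{free} branching random walk $Z$ only: the descendants in $\{0\}_{\tau_R}$ and the particles $\gamma$ entering $F(\beta,\tau_R)$ are selected through $\zeta^0$ (equivalently, $Y^\beta\neq\Delta$ and $Y^\gamma\neq\Delta$), not through the self-avoidance variable $\zeta^1$. Hence no self-avoidance enters and, since $|\beta|=\tau_R$ forces $|\gamma\wedge\beta|>|\beta|-\tau_R=0$, we have
\[
\cU_0(\tau_R)=\frac{N_R}{V(R)}\sum_{|\beta|=\tau_R} 1_{\{\zeta_\beta^0>\tau_R\}}\sum_{\gamma} 1_{\{|\gamma|\le\tau_R\}}\,1_{\{Y^\beta-Y^\gamma\in\cN(0)\}}\,1_{\{|\gamma\wedge\beta|\ge1\}}.
\]
Taking expectations, I would classify each admissible pair $(\beta,\gamma)$ by the generation $j=|\gamma\wedge\beta|\ge1$ of its most recent common ancestor $\delta=\beta|j$ and by $k=|\gamma|$, distinguishing the case $\gamma=\beta|k$ (an ancestor of $\beta$) from the case where $\gamma$ genuinely branches off $\delta$ through one of the other $V(R)-1$ children of $\delta$.

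The structural input is that along any single lineage the increments $W^{\beta|i}$ are i.i.d.\ uniform on $\cN(0)$ and independent of the birth variables $\{B\}$; consequently $Y^\beta-Y^\gamma$ is, up to the negligible distinctness constraint linking the two sibling steps leaving $\delta$, a sum of $s:=(\tau_R-j)+(k-j)$ i.i.d.\ uniform steps. Conditioning and using independence of $\{B\}$ and $\{W\}$, every term splits into a survival weight times $q_s:=\P(\text{such a sum lies in }\cN(0))$. Counting labels (one ancestor at each generation, and $(V(R)-1)V(R)^{k-j-1}$ branch-off descendants reaching generation $k$, each alive with probability $p(R)^{k-j}$) and invoking $V(R)p(R)=1+\theta/N_R$, the weights $(V(R)p(R))^{\tau_R}$ and $(V(R)-1)p(R)$ all tend to $1$ because $\tau_R/N_R\to0$. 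Collecting the contributions of the two cases, the coefficient of $q_s$ is $1+[s/2]=[(s+2)/2]$, giving
\[
b_d^{\tau_R}=\frac{N_R}{V(R)}\,(1+o(1))\sum_{s=1}^{\tau_R}\Big[\tfrac{s+2}{2}\Big]q_s,
\]
the omitted top-end terms ($s$ up to $\approx2\tau_R$, where the label count is cut off) being negligible.

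The analytic input for the limit is a local central limit theorem for the walk with steps uniform on $\cN(0)$, yielding $q_s\to\P(U_s\in[-1,1]^d)$ for each fixed $s$ together with a uniform bound $q_s\le C\,s^{-d/2}$. For $d\ge5$ one has $N_R/V(R)=R^d/((2R+1)^d-1)\to2^{-d}$, the series $\sum_s[(s+2)/2]\P(U_s\in[-1,1]^d)$ converges, and dominated convergence gives $b_d^{\tau_R}\to2^{-d}\sum_s[(s+2)/2]\P(U_s\in[-1,1]^d)=b_d$.

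The delicate case, and the main obstacle, is the critical dimension $d=4$, where $N_R=R^4/\log R$ so $N_R/V(R)\sim2^{-4}/\log R\to0$ and the sum must diverge logarithmically to compensate. Here the sharp local CLT asymptotics $\P(U_s\in[-1,1]^4)\sim36/(\pi^2 s^2)$ (from the $N(0,(s/3)I_4)$ approximation, density $(2\pi s/3)^{-2}$ times the volume $16$ of $[-1,1]^4$) together with $[(s+2)/2]\sim s/2$ make the summand $\sim(18/\pi^2)/s$, so $\sum_{s=1}^{\tau_R}[(s+2)/2]q_s\sim(18/\pi^2)\log\tau_R\sim(18/\pi^2)\cdot4\log R$, using $\log\tau_R\sim\log N_R\sim4\log R$; the two factors then combine to $2^{-4}\cdot(18/\pi^2)\cdot4=9/(2\pi^2)=b_4$. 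Making this rigorous requires controlling the local CLT and lattice-to-continuum discretisation errors \emph{uniformly} over the whole growing range $1\le s\le\tau_R$, so that the $\log R$ generated by the sum cancels the $1/\log R$ in $N_R/V(R)$ exactly; this uniform control over a number of summands that grows with $R$, rather than any single estimate, is the crux of the argument.
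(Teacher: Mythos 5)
Your reduction follows the paper's own route: $\cU_0(\tau_R)$ is a functional of the free BRW through $\zeta^0$ alone, pairs $(\beta,\gamma)$ are classified by their most recent common ancestor, independence of the $\{B\}$ and $\{W\}$ collections splits each term into a survival/counting weight times a return-type probability $q_s$, the weights tend to $1$ because $\tau_R/N_R\to 0$, and your multiplicity count $1+[s/2]=[(s+2)/2]$ recovers exactly the second expression for $b_d$ in \eqref{9ec10.64} (the paper keeps the double sum over $(k,m)$, which is the same thing). For $d\geq 5$ your argument is complete and matches the paper: the Kesten-type uniform bound $q_s\leq Cs^{-d/2}$ (the paper's Lemma \ref{9l4.2}) plus fixed-$s$ convergence and dominated convergence give the claim.

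The genuine gap is in $d=4$, precisely the point you flag at the end and leave unresolved — and the requirement as you state it cannot be met. Uniform control of the Gaussian approximation "over the whole growing range $1\leq s\leq\tau_R$" is false, not merely hard: for fixed $s$, $q_s\to\P(U_s\in[-1,1]^4)$, which is not the Gaussian value $2^4(2\pi s/3)^{-2}$ (at $s=1$ the former is $1$ and the latter is $36/\pi^2\approx 3.65$), so the ratio of $q_s$ to its local-CLT approximation does not tend to $1$ uniformly down to small $s$. The paper's resolution is a two-range split, and it is the missing step in your outline: for $s\leq \log R$ one uses \emph{only} the Kesten bound $q_s\leq Cs^{-2}$, so that range contributes at most $C\sum_{s\leq \log R}s^{-1}=O(\log\log R)=o(\log R)$ and may be discarded; for $s\geq\log R$ one invokes a CLT that is a joint limit in $(R,s)$ (Lemma 4.6 of \cite{BDS89}), which gives $q_s/\big(2^4(2\pi s/3)^{-2}\big)\in[1-\eps,1+\eps]$ uniformly over all $s\geq\log R$ once $R$ is large. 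With this split, the sum over $\log R\leq s\leq \tau_R$ produces the $(18/\pi^2)\cdot 4\log R$ you computed, the prefactor $N_R/V(R)\sim 2^{-4}/\log R$ cancels it, and $b_4=9/(2\pi^2)$ follows. Both ingredients you named suffice, but they must be deployed on separate ranges rather than as one uniform estimate on all of $[1,\tau_R]$; without this your $d=4$ case does not close.
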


Now we are ready to finish the proof of Lemma \ref{4l2.1}.
\begin{proof} [Proof of Lemma \ref{4l2.1}]
By \eqref{4e3.10}, we get
\begin{align}
 &\Big|\E\Big(K_R^{3}-\frac{b_d}{N_R} \sum_{n=0}^{[N_R]-\tau_R} \widetilde{Z}_{n}(1)\Big)\Big|=\Big|\E\Big(\frac{b_d^{\tau_R}-b_d}{N_R} \sum_{n=0}^{[N_R]-\tau_R} \widetilde{Z}_{n}(1)\Big)\Big|\nn\\
  &\leq |b_d^{\tau_R}-b_d| \frac{1}{N_R} \sum_{n=0}^{[N_R]-\tau_R} \E(\widetilde{Z}_{n}(1)) \leq |b_d^{\tau_R}-b_d|  e^{\theta} \to 0.
\end{align}
Together with Lemmas \ref{4l2.2} and \ref{4l2.3}, the conclusion follows immediately.  
\end{proof}
 
 It remain to prove Lemmas \ref{4l2.2}-\ref{4l2.4}.

\section{Convergence of the constant} \label{4s4}

We first prove Lemma \ref{4l2.4} in this section. Then we will get an explicit expression for $b_d$ as in Theorem \ref{4t0}. Recall that $b_d^{\tau_R}=\E(\cU_{0}(\tau_R))$ where
\begin{align*}
 \cU_{0}(\tau_R)=\sum_{\beta\in \{0\}_{\tau_R}}   F(\beta,\tau_R)=\frac{N_R}{V(R)}& \sum_{\beta\geq 0} 1_{|\beta|=\tau_R, \zeta_\beta^0>|\beta|}\sum_{\gamma} 1_{|\gamma\wedge \beta|>  0} 1_{|\gamma|\leq |\beta|} 1_{Y^\beta-Y^\gamma\in \cN(0)}\\
   =\frac{N_R}{V(R)}& \sum_{\beta> 0} \sum_{\gamma\geq 0}1_{|\beta|=\tau_R}   1_{|\gamma|\leq |\beta|} 1_{Y^\beta-Y^\gamma\in \cN(0)}.
\end{align*}
In the last equality we have used $\{\zeta_\beta^0>|\beta|\}=\{Y^\beta \neq \Delta\}$ and $\{Y^\beta, Y^\gamma\neq \Delta\}$ is implicit in $\{Y^\beta-Y^\gamma \in \cN(0)\}$.
Notice that $Y^\beta-Y^\gamma \in \cN(0)$ implies $Y^\beta \neq Y^\gamma$, so we cannot have $\gamma=\beta$. Since $|\gamma|\leq |\beta|=\tau_R$, we must have $\gamma$ branches off $\beta$ at time $\tau_R-k$ for some $1\leq k\leq \tau_R$, meaning that if we let $\alpha=\gamma\wedge \beta$, then $|\alpha|=\tau_R-k$ for some $1\leq k\leq \tau_R$. Set $|\gamma|=|\alpha|+m$ for some $0\leq m\leq k$. In this way, we get
\begin{align}\label{4e11.01}
 b_d^{\tau_R}=& \E(\cU_{0}(\tau_R)) =\frac{N_R}{V(R)} \E\Big(\sum_{k=1}^{\tau_R} \sum_{\substack{\alpha: \alpha\geq 0,\\ |\alpha|=\tau_R-k}} 1_{\zeta_\alpha^0>|\alpha|} \sum_{\substack{\beta: \beta\geq \alpha,\\ |\beta|=\tau_R} } \sum_{m=0}^{k}  \sum_{\substack{\gamma: \gamma\geq \alpha,\\ |\gamma|=\tau_R-k+m}} 1_{Y^\beta-Y^\gamma\in \cN(0)}\Big)\nn\\
 =&  \frac{N_R}{V(R)}\E\Bigg[\sum_{k=1}^{\tau_R} \sum_{\substack{\alpha: \alpha\geq 0,\\ |\alpha|=\tau_R-k}} 1_{\zeta_\alpha^0>|\alpha|} \E\Big(\sum_{\substack{\beta: \beta\geq \alpha,\\ |\beta|=\tau_R} } \sum_{m=0}^{k}  \sum_{\substack{\gamma: \gamma\geq \alpha,\\ |\gamma|=\tau_R-k+m}} 1_{Y^\beta-Y^\gamma\in \cN(0)}\Big|\cH_\alpha\Big) \Bigg],
\end{align}
where the last equality use $\zeta_\alpha^0 \in \cH_\alpha$ (recall $\cH_\alpha$ from \eqref{5e11.24}).

Next, for each $\alpha, \beta, \gamma$ as in the above summation, we get from \eqref{5e7.31} that
\[
Y^\beta-Y^\alpha=W^{\beta|(\tau_R-k+1)}+\cdots+W^{\beta|(\tau_R-1)}+W^{\beta|\tau_R}:=U_{k}^R.
\]
and
\[
Y^\gamma-Y^\alpha=W^{\gamma|(\tau_R-k+1)}+\cdots+W^{\gamma|(\tau_R-k+m-1)}+W^{\gamma|(\tau_R-k+m)}:=V_{m}^R.
\]
Set
\begin{align}\label{4e11.12}
W_{k+m}^R:=U_{k}^R+V_{m}^R.
\end{align}
 Recall that $W^{\beta|i}$ and $W^{\gamma|j}$ for each $i,j$ are uniform on $\cN(0)$. Although there is a slight dependence between $W^{\beta|(\tau_R-k+1)}$ and $W^{\gamma|(\tau_R-k+1)}$ (they are uniformly distributed over $\{(x,y)\in\cN(0)^2: x\neq y\}$), it is clear that the joint distribution of $(W^{\beta|(\tau_R-k+1)},W^{\gamma|(\tau_R-k+1)})$ converges to $(U,V)$ where $U,V$ are independent and uniformly distributed on $[-1,1]^d$. So $W_{k+m}^R=U_{k}^R+V_{m}^R$ will converge in distribution to $U_{k+m}$ where $U_n=Y_1+\cdots+Y_n$ and $Y_i$ are i.i.d. uniform on $[-1,1]^d$. \\

Since both $Y^\beta-Y^\alpha$ and $Y^\gamma-Y^\alpha$ are independent of $\cH_\alpha$,  we get on the event $\{\zeta_\alpha^0>|\alpha|\}$ for $|\alpha|=\tau_R-k$,
\begin{align}\label{4e11.02}
 & \E\Big(\sum_{\substack{\beta: \beta\geq \alpha,\\ |\beta|=\tau_R} } \sum_{m=0}^{k}  \sum_{\substack{\gamma: \gamma\geq \alpha,\\ |\gamma|=\tau_R-k+m}}  1_{Y^\beta-Y^\alpha-(Y^\gamma-Y^\alpha)\in \cN(0)}\Big|\cH_\alpha\Big)\\
  & = \sum_{\substack{\beta: \beta\geq \alpha,\\ |\beta|=\tau_R} } \sum_{m=0}^{k}  \sum_{\substack{\gamma: \gamma\geq \alpha,\\ |\gamma|=\tau_R-k+m}}   \P(U_{k}^R+V_{m}^R\in \cN(0)) p(R)^{k} p(R)^m\nn\\
&=  \sum_{m=0}^{k}  V(R)^{k} (V(R)-1)^{m\wedge 1} V(R)^{(m-1)^+}  \P(W_{k+m}^R\in \cN(0))  p(R)^{k+m}.\nn
\end{align}
In the second line, the term $p(R)^{k} p(R)^m$ gives the probability that $Y^\beta, Y^\gamma \neq \Delta$ on the event $\{Y^\alpha\neq \Delta\}$. The third line follows by counting the number of all possible $\beta,\gamma$ as in the summation, where $(V(R)-1)^{m\wedge 1}$ comes from that $\gamma|(\tau_R-k+1) \neq \beta|(\tau_R-k+1)$. 

The remaining sum of $\alpha$ in \eqref{4e11.01} gives 
\begin{align}\label{4e11.03}
&\E\Big(\sum_{k=1}^{\tau_R} \sum_{\substack{\alpha: \alpha\geq 0,\\ |\alpha|=\tau_R-k}} 1_{\zeta_\alpha^0>|\alpha|} \Big)=\sum_{k=1}^{\tau_R} [V(R)p(R)]^{\tau_R-k}.
 \end{align}
 So we conclude from \eqref{4e11.01}, \eqref{4e11.02}, \eqref{4e11.03} that
\begin{align*}
b_d^{\tau_R}=\frac{N_R}{V(R)} \sum_{k=1}^{\tau_R}  [p(R)V(R)]^{\tau_R}  & \sum_{m=0}^{k} \P(W_{k+m}^R\in \cN(0))   \nn\\
&V(R)^{(m-1)^+} (V(R)-1)^{m\wedge 1} p(R)^{m}.
   \end{align*}
Clearly we may replace $V(R)^{(m-1)^+} (V(R)-1)^{m\wedge 1} $ by $V(R)^m$ as  
\[
1\leq \frac{V(R)^m}{V(R)^{(m-1)^+} (V(R)-1)^{m\wedge 1}} \leq e^{\tau_R/(V(R)-1)} \text{ for all $0\leq m\leq k\leq \tau_R$},
\]
and $\tau_R/(V(R)-1)\leq \tau_R/N_R\to 0$ by \eqref{5e2.10}. Hence 
\begin{align*}
\lim_{R\to \infty} b_d^{\tau_R}=\lim_{R\to \infty} \frac{N_R}{V(R)} \sum_{k=1}^{\tau_R}   [V(R)p(R)]^{\tau_R} \sum_{m=0}^{k}  \P(W_{k+m}^R\in \cN(0))   [V(R)p(R)]^{m}.
\end{align*}
Similarly, we may replace $[V(R)p(R)]^{m}$ and $[V(R)p(R)]^{\tau_R}$ by $1$ as 
\[
1\leq [V(R)p(R)]^{m} \leq e^{\tau_R/N_R} \text{ for all $0\leq m\leq \tau_R$},
\]
and so
\begin{align}\label{4e11.13}
 \lim_{R\to \infty} b_d^{\tau_R} =\lim_{R\to \infty} \frac{N_R}{V(R)} \sum_{k=1}^{\tau_R }   \sum_{m=0}^{ k}  \P(W_{k+m}^R\in \cN(0)).
\end{align}

To calculate the above limit, we let $Y_1^R,Y_2^R, \cdots$ be i.i.d  uniform on $\cN(0)$. Define $S_n^R=Y_1^R+\cdots+Y_n^R$ for each $n\geq 1$ and set $S_n^R=0$ for $n\leq 0$. The following lemma comes from (4) in Section 2 of \cite{BDS89} and the concentration inequality by Kesten \cite{Kes69}.
\begin{lemma}\label{9l4.2}
 There is some constant $C>0$ independent of $R$ so that
\begin{align*}
\P(x+S_n^R\in [-1,1]^d)\leq C  (1+n)^{-d/2}, \quad \forall n\geq 0, x\in \R^d.
\end{align*}
\end{lemma}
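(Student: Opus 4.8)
The goal is to prove the local central limit / concentration bound stated in Lemma \ref{9l4.2}: for i.i.d. uniform steps $Y_i^R$ on $\cN(0)$, the partial sum $S_n^R$ satisfies $\P(x+S_n^R\in[-1,1]^d)\leq C(1+n)^{-d/2}$ uniformly in $x$ and $R$. I will treat this as a decay estimate on the probability that a lattice random walk lands in a fixed-size box, and the essential point is that the bound must be uniform in the scale parameter $R$.

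The plan is to invoke the two cited ingredients in tandem. First I would use the local limit estimate from (4) in Section 2 of \cite{BDS89}, which controls the maximal point mass of the $n$-fold convolution: a bound of the form $\sup_{y} \P(S_n^R=y)\leq C'(1+n)^{-d/2} R^{-d}$, where the factor $R^{-d}$ accounts for the spacing of the lattice $\Z_R^d$ and the normalization of the uniform step distribution on $\cN(0)$ (whose single-step variance is of constant order, independent of $R$). The box $[-1,1]^d$ contains on the order of $V(R)+1=(2R+1)^d$ lattice points of $\Z_R^d$, so naively summing the pointwise bound over all lattice points in the box would give $(2R+1)^d\cdot C'(1+n)^{-d/2}R^{-d}$, which is $O((1+n)^{-d/2})$ with a constant independent of $R$. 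This already yields the claim in the regime where $n$ is not small relative to $R$, but the step-distribution local limit bound alone is delicate to control uniformly; this is where Kesten's concentration inequality \cite{Kes69} enters.

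The cleaner route is to directly apply Kesten's concentration inequality, which bounds the concentration function $Q(S_n^R; [-1,1]^d):=\sup_x \P(x+S_n^R\in[-1,1]^d)$ in terms of the spread of the individual summands. Kesten's result gives $Q(S_n^R;\text{box})\leq C\, n^{-d/2}\,Q(Y_1^R;\text{box})^{\,?}$-type control, but more usefully it gives the scaling $Q(S_n^R; I)\leq C (\ell/\sqrt{n\,\sigma_R^2})^d$ for a box $I$ of side $\ell$, provided $\ell\geq \sqrt{n}\,\sigma_R$ or with an appropriate truncation otherwise, where $\sigma_R^2$ is the per-coordinate step variance. Here the box has fixed side $\ell=2$ while $\sigma_R^2$ is of constant order (the uniform law on $\cN(0)$ has coordinate variance bounded above and below independently of $R$, since $Y_1^R/1$ is essentially uniform on the unit cube at scale $1/R$). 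Thus $\sqrt{n}\,\sigma_R \asymp \sqrt{n}$ and the concentration bound reads $C(1/\sqrt{n})^d = C(1+n)^{-d/2}$, handling $n\geq 1$ uniformly in $R$; the case $n=0$ (and small $n$) is trivial since the probability is at most $1$ and $(1+n)^{-d/2}$ is bounded below. Translation invariance of the bound in $x$ is immediate because the concentration function is defined as a supremum over centers.

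The main obstacle I anticipate is verifying the uniformity in $R$: one must check that the relevant step-variance (equivalently, the constant in the local limit or concentration estimate) does not degenerate as $R\to\infty$. Concretely, I would confirm that the normalized single step $Y_1^R$, which is uniform on the $(2R+1)^d-1$ points of $\cN(0)$, has a per-coordinate variance converging to the variance of a uniform random variable on $[-1,1]$ (namely $1/3$), and in particular is bounded away from $0$ and $\infty$ uniformly in $R$. Once this is in hand, the cited theorems apply with constants independent of $R$, and combining the $n\geq 1$ bound from concentration with the trivial $n=0$ bound gives the stated estimate $\P(x+S_n^R\in[-1,1]^d)\leq C(1+n)^{-d/2}$ for all $n\geq 0$, $x\in\R^d$, and all $R$. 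I would keep the calculation at the level of citing \cite{BDS89} and \cite{Kes69} rather than reproving the local limit theorem, since the lemma explicitly attributes the bound to those two sources.
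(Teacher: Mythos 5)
Your proposal matches the paper's treatment: the paper gives no proof of this lemma at all, simply attributing it to (4) in Section~2 of \cite{BDS89} together with Kesten's concentration inequality \cite{Kes69}, which is exactly the pair of sources you invoke. Your account of how they combine --- the pointwise bound $\sup_y \P(S_n^R = y) \le C(1+n)^{-d/2}R^{-d}$ summed over the $O(R^d)$ points of the shifted lattice inside $[-1,1]^d$, with uniformity in $R$ coming from the per-coordinate step variance staying bounded away from $0$ and $\infty$, and the trivial bound for $n=0$ and small $n$ --- is the natural reading of that citation and is sound.
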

Recall $W_{k+m}^R$ from \eqref{4e11.12}.  Apply Lemma \ref{9l4.2} to get  
\begin{align}\label{4e11.15}
\P(W_{k+m}^R\in \cN(0))\leq \sup_x \P(x+S_{k+m-1}^R\in [-1,1]^d) \leq \frac{C}{(k+m)^{d/2}}.
\end{align}

\no When $d\geq 5$, we get
\[
\frac{N_R}{V(R)}=\frac{R^d}{(2R+1)^d-1} \to 2^{-d},
\]
and \eqref{4e11.15} implies
\[
 \sum_{k=1}^{\tau_R }   \sum_{m=0}^{ k}  \P(W_{k+m}^R\in \cN(0)) \leq  \sum_{k=1}^{\infty}\sum_{m=0}^{k}  \frac{C}{(k+m)^{d/2}}<\infty.
\]
Therefore by applying Dominated Convergence on the right-hand side of \eqref{4e11.13}, we get
\begin{align*}
\lim_{R\to \infty}  b_d^{\tau_R}  =2^{-d} \sum_{k=1}^{\infty}\sum_{m=0}^{k} \P(U_{k+m}\in [-1,1]^d)=b_d.
\end{align*}

When $d=4$, we have $\frac{V(R)}{N_R} \sim 2^4 \log R$, thus giving
\begin{align}  \label{4e11.10}
\lim_{R\to \infty} b_4^{\tau_R}=\lim_{R\to \infty} \frac{1}{2^4 \log R}  \sum_{k=1}^{\tau_R}   \sum_{m=0}^{k}  \P(W_{k+m}^R\in \cN(0)).
\end{align}
By \eqref{4e11.15}, we have
\begin{align*} 
\sum_{k=1}^{\log R}   \sum_{m=0}^{k}  \P(W_{k+m}^R\in \cN(0))&\leq \sum_{k=1}^{\log R}   \sum_{m=0}^{k} \frac{C}{(k+m)^2}\\
&\leq \sum_{k=1}^{\log R}     \frac{C}{k}\leq C \log \log R=o(\log R).
\end{align*}
Hence we may delete the sum for $1\leq k\leq \log R$ in \eqref{4e11.10} to get
\begin{align}  \label{4e11.85}
\lim_{R\to \infty} b_4^{\tau_R}=\lim_{R\to \infty} \frac{1}{2^4 \log R}  \sum_{k=\log R}^{\tau_R}   \sum_{m=0}^{k}  \P(W_{k+m}^R\in \cN(0)).
\end{align}

 For each $m\geq 0$ and $k\geq 1$, define
 \[
 h_R(k,m)=\P(W_{k+m}^R\in \cN(0))\quad  \text{ and } \quad g(k,m)=2^d (2\pi/3)^{-d/2} (k+m)^{-d/2}.
 \]
 The following is an application of the classical Central Limit Theorem.
\begin{lemma}
(i) If $R\to\infty$ and $x_n/n^{1/2}\to x$ as $n\to \infty$, then for any Borel set with $\vert \partial B\vert =0$ and $\vert B\vert <\infty$, we have
\begin{align*}
n^{d/2}\P(x_n+S_{n}^R\in B) \to \vert B\vert  \cdot  (2\pi\sigma^2)^{-d/2} e^{-{\vert x\vert ^2}/{2\sigma^2}},
\end{align*}
where $\sigma^2=1/3$ is the limit of the variance of one component of $Y_1^R$ as $R\to\infty$.\\
\no (ii) For any $\eps>0$ small, if $R$ is large, then 
\begin{align}  \label{9ec1.85}
h_R(k,m)/g(k,m)\in [1-\eps,1+\eps], \quad  \forall k\geq \log R, m\geq 0.
\end{align}
\end{lemma}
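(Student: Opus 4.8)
The plan is to deduce both (i) and (ii) from a single uniform-in-$R$ local central limit theorem for the lattice walk $S_n^R$, obtained through the characteristic function. Write $\phi_R(\theta)=\E[e^{i\theta\cdot Y_1^R}]=V(R)^{-1}\sum_{y\in\cN(0)}e^{i\theta\cdot y}$; since $Y_1^R$ takes values in $\frac{1}{R}\Z^d$, $\phi_R$ is $2\pi R$-periodic in each coordinate and Fourier inversion on the scaled lattice gives
\begin{align*}
\P(S_n^R=z)=\frac{R^{-d}}{(2\pi)^d}\int_{[-\pi R,\pi R]^d}\phi_R(\theta)^n\,e^{-i\theta\cdot z}\,d\theta,\qquad z\in\tfrac{1}{R}\Z^d.
\end{align*}
Because $\cN(0)$ is symmetric about $0$, $\phi_R$ is real and even with $\phi_R(\theta)=1-\tfrac{\sigma_R^2}{2}|\theta|^2+o(|\theta|^2)$, where $\sigma_R^2$, the per-coordinate variance of $Y_1^R$, satisfies $\sigma_R^2\to\sigma^2=1/3$. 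First I would substitute $\theta=u/\sqrt n$ and split the integral into a central part $\{|u|\le\eta\sqrt n\}$ and a peripheral part. On the central part $\phi_R(u/\sqrt n)^n\to e^{-\sigma^2|u|^2/2}$ with a fixed Gaussian dominating function, and Fourier inversion yields the pointwise statement $n^{d/2}\P(S_n^R=z)\to R^{-d}(2\pi\sigma^2)^{-d/2}e^{-|x|^2/(2\sigma^2)}$ whenever $z/\sqrt n\to -x$.

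For part (i) I would sum this local estimate over $z\in(B-x_n)\cap\frac{1}{R}\Z^d$. Every such $z$ satisfies $z/\sqrt n\to -x$, since $B$ is bounded and $x_n/\sqrt n\to x$, and the hypothesis $|\partial B|=0$ makes the lattice-point count $|(B-x_n)\cap\frac{1}{R}\Z^d|$ asymptotic to $|B|R^d$. Multiplying the per-point limit by this count, the factor $R^{-d}$ cancels against $R^d$ and produces $n^{d/2}\P(x_n+S_n^R\in B)\to |B|(2\pi\sigma^2)^{-d/2}e^{-|x|^2/(2\sigma^2)}$, which is (i).

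For part (ii) I would specialize to $x=0$ and $B=[-1,1]^d$ and track three harmless corrections. First, $h_R(k,m)=\P(W^R_{k+m}\in[-1,1]^d)-\P(W^R_{k+m}=0)$, and the point mass at $0$ is of order $R^{-d}n^{-d/2}$, so removing the origin costs only a factor $1+O(R^{-d})$. Second, $W^R_{k+m}$ differs from $S^R_{k+m}$ only in that the two increments leaving the common ancestor are forced distinct; this replaces one factor $\phi_R(\theta)^2$ in the characteristic function by $\frac{V(R)\phi_R(\theta)^2-\phi_R(2\theta)}{V(R)-1}=\phi_R(\theta)^2(1+O(1/V(R)))$, again a uniform $1+O(R^{-d})$ correction. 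Third, for $z\in[-1,1]^d$ and $n=k+m\ge\log R$ one has $|z|^2/(2\sigma_R^2 n)\le d/(2\sigma_R^2\log R)\to 0$, so each Gaussian factor $e^{-|z|^2/(2\sigma_R^2 n)}=1+O(1/\log R)$. Summing the local CLT over the $V(R)$ points of $\cN(0)$ and using $V(R)R^{-d}\to 2^d$ and $\sigma_R^2\to 1/3$ then gives $h_R(k,m)/g(k,m)\to 1$, uniformly over $k\ge\log R$ and $m\ge 0$.

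The hard part is exactly this last claim of uniformity, that is, upgrading the pointwise local CLT of the first paragraph to an estimate uniform in $R$ and in $n\ge\log R$. The delicate ingredient is the peripheral integral $\int_{\theta\in[-\pi R,\pi R]^d,\,|\theta|>\eta}|\phi_R(\theta)|^n\,d\theta$: the Fourier domain itself grows with $R$, so no fixed spectral gap is available. To handle it I would exploit the near-product structure of $\cN(0)$: up to the $O(1/V(R))$ removal of the origin, $\phi_R(\theta)=\prod_{j=1}^d\psi_R(\theta_j)$ with $\psi_R$ the normalized one-dimensional Dirichlet kernel $\psi_R(t)=\frac{\sin((R+\frac12)t/R)}{(2R+1)\sin(t/(2R))}$, so the peripheral integral factorizes into $d$ copies of $\int_{-\pi R}^{\pi R}|\psi_R(t)|^n\,dt$. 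Near $t=0$ this one-dimensional integral is governed by $\int e^{-n\sigma_R^2 t^2/2}\,dt=\sqrt{2\pi/(n\sigma_R^2)}$, of order $n^{-1/2}$, while on $\{|t|>\eta\}$ I would establish a quantitative bound $|\psi_R(t)|\le 1-c$ uniform in $R$, with the anticoncentration inequality of Lemma \ref{9l4.2} (Kesten) supplying precisely the uniform control needed to show the remaining contribution is $o(n^{-1/2})$ for $n\ge\log R$. Assembling the $d$ factors yields the uniform local CLT and, through the two preceding paragraphs, both parts of the lemma.
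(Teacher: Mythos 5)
Your route is genuinely different from the paper's, but it is worth saying up front that the paper does not prove (i) at all: it is quoted from Lemma 4.6 of \cite{BDS89}, and (ii) is then deduced in two lines from the fact that the convergence in (i) is joint in $(R,n)$, so that $k\geq \log R$ forces $k+m$ to be large once $R$ is large. You instead set out to prove the uniform-in-$R$ local CLT from scratch by Fourier inversion. That is a legitimate (and more self-contained) plan, and your central-region analysis, the treatment of the deleted origin, the distinct-first-steps correction, and the summation over lattice points in $B$ are all sound. The problem is that the step you yourself flag as ``the hard part'' --- the peripheral integral --- is not closed by the tools you name, and in fact cannot be.

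Concretely: on $\{\eta<|t|\leq \pi R\}$ you propose the flat bound $|\psi_R(t)|\leq 1-c$ uniform in $R$, supplemented by Lemma \ref{9l4.2}. The flat bound is true but insufficient in exactly the regime part (ii) requires. The Fourier domain has measure about $2\pi R$, so the flat bound only yields $\int_{\eta<|t|\leq\pi R}|\psi_R(t)|^n\,dt\leq 2\pi R\,(1-c)^n$, and part (ii) must cover $n=k+m$ as small as $\lceil \log R\rceil$ (take $k=\lceil\log R\rceil$, $m=0$). At $n=\log R$ this bound is $2\pi R^{1-c'}$ with $c'=-\log(1-c)$; since $|\psi_R(2)|\to \sin 2/2\approx 0.45>e^{-1}$, any uniform gap satisfies $1-c\geq 0.45$, hence $c'<1$, and the bound \emph{diverges} as $R\to\infty$ rather than being $o(n^{-1/2})$. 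Kesten's inequality cannot repair this: it is an upper bound on probabilities of order $n^{-d/2}$ --- the same order as the main term you are trying to isolate --- and the only way to convert it into Fourier information (Parseval/symmetrization, $\int_{[-\pi R,\pi R]^d}|\phi_R(\theta)|^{2n}d\theta=(2\pi R)^d\sum_z \P(S_n^R=z)^2$) reintroduces precisely the volume factor $(2\pi R)^d$ you need to beat; the logical traffic between concentration bounds and peripheral characteristic-function bounds runs in the other direction. The missing ingredient is elementary but essential: the pointwise decay of the Dirichlet kernel, $|\psi_R(t)|\leq [(2R+1)|\sin(t/(2R))|]^{-1}\leq \pi/(2|t|)$ for $0<|t|\leq\pi R$, which gives $\int_{2\leq|t|\leq\pi R}|\psi_R(t)|^n dt\leq \frac{4}{n-1}(\pi/4)^n$ with \emph{no} $R$-dependent factor, while $\{\eta<|t|<2\}$ has bounded measure so the flat gap does suffice there. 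With that substitution the peripheral term is exponentially small in $n$ uniformly in $R$, which is what you need for $n\geq\log R$. One further caution for the same region: deleting the origin perturbs $\prod_j\psi_R(\theta_j)$ by an \emph{additive} $O(1/V(R))$, not a multiplicative $1+O(R^{-d})$, and on the far region where $\prod_j|\psi_R(\theta_j)|$ is tiny this is not a harmless ratio; a binomial expansion of $\bigl(\prod_j|\psi_R(\theta_j)|+CV(R)^{-1}\bigr)^n$ (or a Binomial-thinning coupling with the full-box walk) is needed to absorb it. As written, then, the proposal has a genuine gap at its crucial estimate, though the overall architecture can be completed once the flat-gap-plus-Kesten step is replaced by the kernel-decay argument.
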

\begin{proof}
By Lemma 4.6 of \cite{BDS89}, we have (i) holds.  For the proof of (ii), we note (i) ensures that 
\[
\lim_{k+m\to \infty} \frac{h_R(k,m)}{g(k,m)}=1.
\]
So if $R$ is large enough, we get $k+m\geq \log R$ is large and hence \eqref{9ec1.85} follows.
\end{proof}
By \eqref{9ec1.85}, we may replace $h_R(k,m)$ in \eqref{4e11.85} by $g(k,m)$ to get
\begin{align}\label{4e7.61}
\lim_{R\to \infty}  b_4^{\tau_R}=\lim_{R\to \infty} \frac{1}{2^4 \log R}  \sum_{k=\log R}^{\tau_R}   \sum_{m=0}^{k}  2^4 (2\pi/3)^{-2} (k+m)^{-2}.
\end{align}
For any $\eps>0$, it is easy to check if $k\geq \log R$ is large, then
\[
 \sum_{m=0}^{k} \frac{1}{(k+m)^{2}}  \in \Big[(1-\eps) \frac{1}{2k}, (1+\eps) \frac{1}{2k}\Big].
 \]
 So replace $ \sum_{m=0}^{k} {(k+m)^{-2}}$ in \eqref{4e7.61} by $\frac{1}{2k}$ to get
\begin{align*}
\lim_{R\to \infty}  b_4^{\tau_R}=\lim_{R\to \infty}  \frac{9}{4\pi^2\log R} \sum_{k=\log R}^{R^4/\log R}    \frac{1}{2k}=\frac{9}{2\pi^2}=b_4,
\end{align*}
as required.

\section{Proof of Lemma \ref{4l2.2}} \label{4s5}

In this section, we will prove Lemma \ref{4l2.2}. Define for each $t\geq 0$
\begin{align}
I(t)=1+\int_0^t (1+s)^{1-d/2} ds.
\end{align}
One can check that there exists some constant $C>0$ so that
\begin{align}\label{4e4.54}
  I(N_R)\leq 
  \begin{dcases}
  C, &\quad \forall R\text{ large in $d\geq 5$},\\
  C \log R,& \quad \forall R\text{ large in $d=4$},
 \end{dcases}
\end{align}
and
\begin{align}\label{4e4.55}
 N_R I(N_R)\leq CV(R).
\end{align}

Recall $K_R^1$ from \eqref{5e1.8} and $K_R^2$ from \eqref{4e3.60} to get
\begin{align*} 
|K^1_R-K_R^2|\leq  \frac{1}{V(R)}\sum_{n=\tau_R}^{[N_R]}\sum_{|\beta|=n} 1_{\{\zeta_\beta^0>|\beta|\}} \Big|\nu_\tau(\beta)-\sum_{\gamma}  1_{\zeta_{\gamma}^1>|\gamma|} \text{nbr}_{\beta, \gamma}(\tau)\Big|,
\end{align*}
where we have replaced $\zeta_\beta^1>|\beta|$ by $\zeta_\beta^0>|\beta|$.
The absolute value on the right-hand side above arises from the multiple occupancies of particles, that is, $\nu_\tau(\beta)$ in $K^1_R$ only counts the number of sites in the neighborhood of $Y^\beta$ that has been occupied whereas the corresponding term in $K^2_R$ counts the total number of particles that have ever visited that neighborhood. We claim that
\begin{align*}
&|\nu_\tau(\beta)-\sum_{\gamma}  1_{\zeta_{\gamma}^1>|\gamma|} \text{nbr}_{\beta, \gamma}(\tau)|\\
&\leq\sum_{\gamma} 1_{|\gamma|\leq |\beta|}  1_{Y^\beta-Y^\gamma\in \cN(0)}1_{|\gamma\wedge \beta|>|\beta|-\tau_R}  \sum_{\alpha\neq \gamma} 1_{Y^\alpha=Y^\gamma} 1_{|\alpha|\leq |\beta|} 1_{|\alpha\wedge \beta|>|\beta|-\tau_R}.
\end{align*}
To see this, if there are $k\geq 2$ particles that have ever visited the site $Y^\gamma$ in the neighborhood of $Y^\beta$, then they will contribute at most $k-1$ to the left-hand side while at least $k(k-1)$ to the right-hand side, thus giving the above. Note $\{Y^\gamma, Y^\alpha \neq \Delta\}$ is implicit in $\{Y^\alpha=Y^\gamma\}$. It follows that
 \begin{align}\label{4e4.58}
\E(|K_R^1-K_R^2|) &\leq   \frac{1}{V(R)}  \E\Big( \sum_{n=\tau_R}^{[N_R]} \sum_{|\beta|=n} 1_{\{\zeta_\beta^0>|\beta|\}} \sum_{\gamma} 1_{|\gamma|\leq |\beta|}  1_{Y^\beta-Y^\gamma\in \cN(0)}\nn \\
&1_{|\gamma\wedge \beta|>|\beta|-\tau_R}  \sum_{\alpha\neq \gamma} 1_{Y^\alpha=Y^\gamma} 1_{|\alpha|\leq |\beta|} 1_{|\alpha\wedge \beta|>|\beta|-\tau_R}\Big):=\frac{1}{V(R)}  \E(J_1).
\end{align}
Since $\alpha$ and $\gamma$ are symmetric, we may assume $\alpha\wedge \beta \leq \gamma\wedge \beta$. There two cases for $\alpha, \beta, \gamma$:
 \begin{align*} 
 &\text{ (i) } \alpha\wedge \beta < \gamma\wedge \beta; \quad \text{(ii) } \alpha\wedge \beta = \gamma\wedge \beta.
\end{align*}
Denote by $J^{(i)}_1$ (resp. $J^{(ii)}_1$) for the contribution to $J_1$ when $\alpha,\beta, \gamma$ satisfy case (i) (resp. case (ii)).  \\

\no {\bf Case (i).} Let $\sigma=\alpha\wedge \beta$ and $\delta=\gamma\wedge \beta$. In case (i) we have $\sigma < \delta$ and $\delta<\beta$ by $Y^\gamma  \neq Y^\beta$.
For each $|\beta|=n$ with $\tau_R\leq n\leq [N_R]$, we let $\sigma=\beta|k$ and $\delta=\beta|j$ for some $n-\tau_R\leq k<j\leq n-1$. Set $|\alpha|=k+m$ for some $0\leq m\leq n-k$ and $|\gamma|=j+l$ for some $0\leq l\leq n-j$. 
See Figure \ref{fig1} below for illustration.\\

\begin{figure}[ht]
  \begin{center}
    \includegraphics[width=0.65 \textwidth]{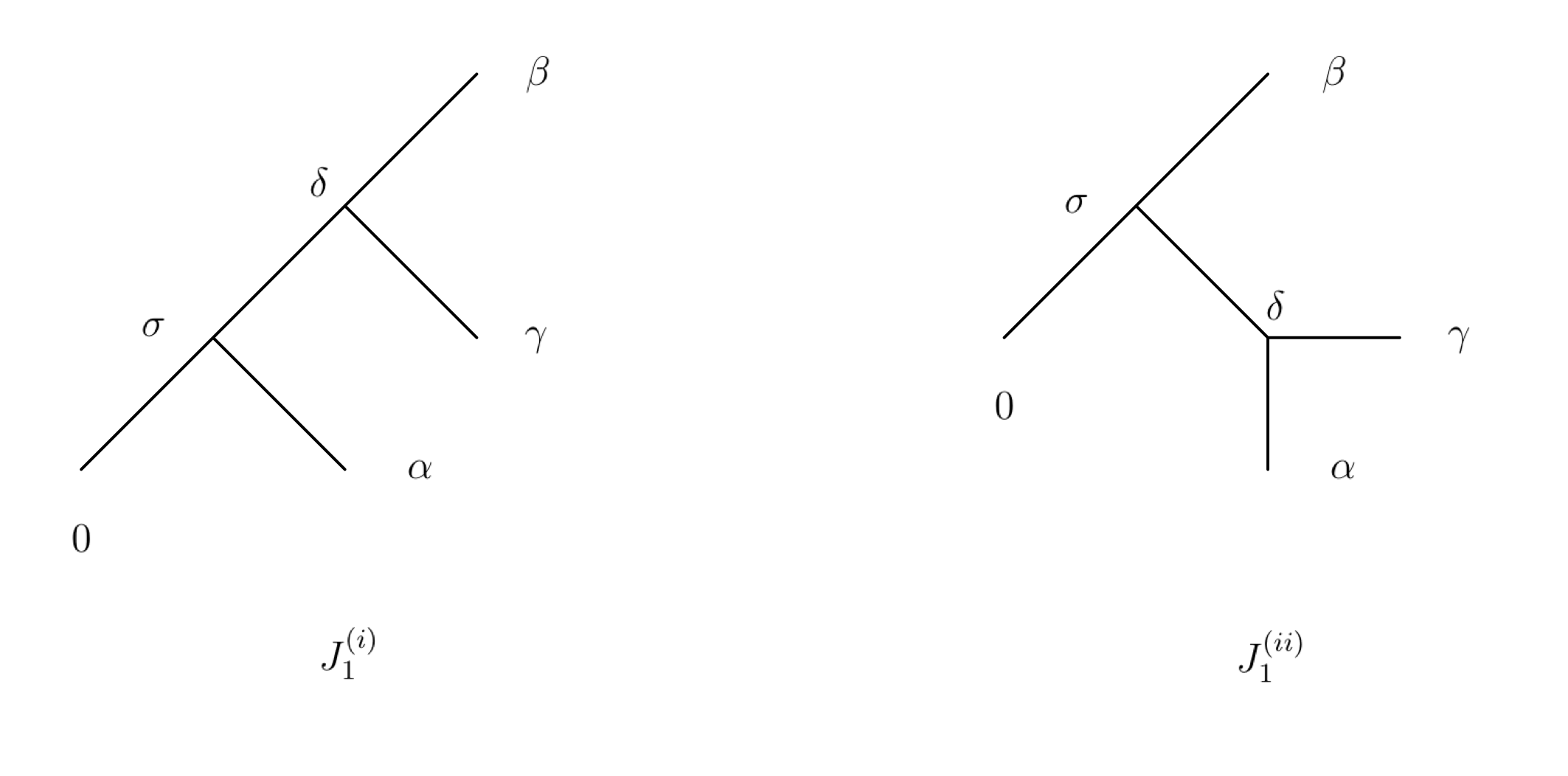}
    \caption[Branching Particle System]{\label{fig1}   Two cases for $J_1$.
      }
  \end{center}
\end{figure}

The sum of $\alpha, \beta, \gamma$ from $J^{(i)}_1$ can be written as
 \begin{align}\label{9e1.21}
\E(J_1^{(i)}) =\E\Big( \sum_{n=\tau_R}^{[N_R]} &  \sum_{k=n-\tau_R}^{n-1} \sum_{j=k+1}^{n-1} \sum_{m=0}^{n-k} \sum_{l=0}^{n-j}\sum_{\substack{\sigma: |\sigma|=k}} \sum_{\substack{\delta: |\delta|=j,\\ \delta \geq \sigma}} \sum_{\substack{\alpha: |\alpha|=k+m,\\ \alpha\geq \sigma}} \sum_{\substack{\gamma: |\gamma|=j+l,\\ \gamma\geq \delta}}\sum_{\substack{\beta: |\beta|=n, \\ \beta\geq \delta}} \nn  \\
& 1_{\{Y^\alpha, Y^\beta, Y^\gamma\neq \Delta\}} 1_{\{Y^\beta-Y^\gamma\in \cN(0)\}} 1_{\{Y^\alpha=Y^\gamma\}}    \Big).
 \end{align} 
 Recall $\cH_\alpha$ from \eqref{5e11.24}. By conditioning on $\cH_\alpha \vee\cH_\gamma$, on the event $\{Y^\alpha, Y^\beta, Y^\gamma\neq \Delta\}$ we get
\begin{align*}
    &\P(Y^\beta-Y^\gamma\in \cN(0)|\cH_\alpha \vee\cH_\gamma)\\
    & =\P\Big(Y^\beta-Y^\delta-W^{\beta|(j+1)}+(Y^\delta+W^{\beta|(j+1)}-Y^\gamma)\in \cN(0)\Big|\cH_\alpha \vee\cH_\gamma\Big).
 \end{align*}
Note that $\delta=\beta|j$. Recall \eqref{5e7.31} to get 
\[
Y^\beta-Y^\delta-W^{\beta|(j+1)}=\sum_{t=j+2}^n  W^{\beta|t},
\]
which is independent of $\cH_\alpha \vee\cH_\gamma$. Hence we get  
 \begin{align}\label{4e1.22}
    &\P(Y^\beta-Y^\gamma\in \cN(0)|\cH_\alpha \vee\cH_\gamma) \leq  \sup_x \P\Big(\sum_{t=j+2}^n  W^{\beta|t}+x\in [-1,1]^d\Big)\leq   \frac{C}{(n-j)^{d/2}}, 
 \end{align}  
where the last inequality is by Lemma \ref{9l4.2}.

Next, recall $|\alpha\wedge \gamma|=|\sigma|=k$ with $|\alpha|=k+m$ and $|\gamma|=j+l$. Use \eqref{5e7.31} again to get
\[
Y^\alpha-Y^\gamma=\sum_{t=k+1}^{k+m} W^{\alpha|t}+\sum_{s=k+1}^{j+l} W^{\gamma|s}.
\]
Notice that $W^{\gamma|(j+l)}$ is independent of everything else. Use the total probability formula and let $W^{\gamma|(j+l)}=e_i$ for $1\leq i\leq V(R)$ to obtain
 \begin{align}\label{4e1.23}
    &\P(Y^\alpha=Y^\gamma)=\frac{1}{V(R)} \sum_{i=1}^{V(R)} \P(Y^\alpha-Y^\gamma-W^{\gamma|(j+l)}=-e_i)\nn\\
    &=\frac{1}{V(R)} \P\Big(\sum_{t=k+1}^{k+m} W^{\alpha|t}+\sum_{s=k+1}^{j+l-1} W^{\gamma|s}\in \cN(0)\Big) \leq \frac{1}{V(R)} \frac{C}{(j+l-k+m)^{d/2}}.
\end{align}

Combine \eqref{4e1.22} and \eqref{4e1.23} to see \eqref{9e1.21} becomes
\begin{align*} 
\E(J_1^{(i)})\leq& \sum_{n=\tau_R}^{[N_R]}   \sum_{k=n-\tau_R}^{n-1} \sum_{j=k+1}^{n-1} \sum_{m=0}^{n-k} \sum_{l=0}^{n-j}\sum_{\substack{\sigma: |\sigma|=k}} \sum_{\substack{\delta: |\delta|=j,\\ \delta \geq \sigma}} \sum_{\substack{\alpha: |\alpha|=k+m,\\ \alpha\geq \sigma}} \sum_{\substack{\gamma: |\gamma|=j+l,\\ \gamma\geq \delta}}\sum_{\substack{\beta: |\beta|=n, \\ \beta\geq \delta}}   \\
&\quad \P(Y^\alpha, Y^\beta, Y^\gamma\neq \Delta)  \frac{C}{(n-j)^{d/2}}\frac{1}{V(R)}  \frac{C}{(l+(j-k)+m)^{d/2}}.
\end{align*}
The probability $\P(Y^\alpha, Y^\beta, Y^\gamma\neq \Delta)$ is bounded by $p(R)^{k} p(R)^{j-k}p(R)^{m} p(R)^{l}  p(R)^{n-j}$ while the sum of $\sigma, \delta, \alpha, \gamma, \beta$ gives $V(R)^{k} V(R)^{j-k}V(R)^{m} V(R)^{l}  p(R)^{n-j}$. So the above is at most
\begin{align*} 
\E(J_1^{(i)})\leq&\sum_{n=\tau_R}^{[N_R]}   \sum_{k=n-\tau_R}^{n-1} \sum_{j=k+1}^{n-1} \sum_{m=0}^{n-k} \sum_{l=0}^{n-j} (V(R)p(R))^{k}  (V(R)p(R))^{j-k}  (V(R)p(R))^{m}    \\
&\quad (V(R)p(R))^{l}   (V(R)p(R))^{n-j}  \frac{C}{(n-j)^{d/2}}\frac{1}{V(R)}  \frac{C}{(l+(j-k)+m)^{d/2}}.
\end{align*}
Use $k+(j-k)+m+l+(n-j)\leq 3n\leq 3[N_R]$ and $V(R)p(R)\leq e^{\theta/N_R}$ to get the above can be bounded by
 \begin{align*}
C e^{3\theta }\frac{1}{V(R)}  \sum_{n=\tau_R}^{[N_R]}   \sum_{k=n-\tau_R}^{n-1} \sum_{j=k+1}^{n-1} \sum_{m=0}^{n-k} \sum_{l=0}^{n-j}  \frac{1}{(n-j)^{d/2}}\frac{1}{(l+(j-k)+m)^{d/2}}.
 \end{align*} 
The sum of $l$ gives at most $C/(1+(j-k)+m)^{d/2-1}$ and then the sum of $m$ gives at most 
\[
\sum_{m=0}^{n-k} \frac{C}{(1+(j-k)+m)^{d/2-1}}\leq \sum_{m=0}^{N_R} \frac{1}{(1+m)^{d/2-1}} \leq I(N_R).
\]
Next, the sum of $j$ gives
\[
\sum_{j=k+1}^{n-1}  \frac{1}{(n-j)^{d/2}}=\sum_{j=1}^{n-k-1}  \frac{1}{j^{d/2}}\leq C.
\]
Combine the above to see
 \begin{align}\label{4e4.57}
\E(J_1^{(i)})\leq&  C e^{3\theta }\frac{1}{V(R)}\sum_{n=\tau_R}^{[N_R]}   \sum_{k=n-\tau_R}^{n-1}  CI(N_R)\leq C\frac{1}{V(R)}I(N_R) \tau_R N_R \leq C\tau_R,
 \end{align}
 where the last inequality uses \eqref{4e4.55}. \\

\no {\bf Case (ii)}: In this case we have $\alpha\wedge \beta=\gamma \wedge \beta$. Let $\sigma=\alpha\wedge \beta$.
For each $|\beta|=n$ with $\tau_R\leq n\leq [N_R]$, we let $\sigma=\beta|k$ for some $n-\tau_R\leq k\leq n-1$ as we assume $|\alpha\wedge \beta|\geq  |\beta|-\tau_R$. Let $\delta=\alpha \wedge \gamma$. Then $\delta\geq \sigma$ and we set $|\delta|=|\sigma|+j=k+j$ for some $0\leq j\leq n-k$. Let $|\alpha|=k+j+m$ and $|\gamma|=k+j+l$ for some $0\leq m,l\leq n-k-j$. See Figure \ref{fig1} for the illustration for $J^{(ii)}_1$. The sum of $\alpha, \beta, \gamma$ in $J^{(ii)}_1$ can be written as
 \begin{align}\label{9e1.24}
\E(J_1^{(ii)})= \E\Big(\sum_{n=\tau_R}^{[N_R]} &  \sum_{k=n-\tau_R}^{n-1} \sum_{j=0}^{n-k} \sum_{m=0}^{n-k-j} \sum_{l=0}^{n-k-j}\sum_{\substack{\sigma: |\sigma|=k}} \sum_{\substack{\beta: |\beta|=n, \\ \beta\geq \sigma}} \sum_{\substack{\delta: |\delta|=k+j,\\ \delta \geq \sigma}} \sum_{\substack{\alpha: |\alpha|=k+j+m,\\ \alpha\geq \delta}} \sum_{\substack{\gamma: |\gamma|=k+j+l,\\ \gamma\geq \delta}}  \nn \\
&1_{\{Y^\alpha, Y^\beta, Y^\gamma\neq \Delta\}} 1_{\{Y^\beta-Y^\gamma\in \cN(0)\}} 1_{\{Y^\alpha=Y^\gamma\}}    \Big).
 \end{align} 
 Similar to the derivation of \eqref{4e1.22}, one may get that on the event $\{Y^\alpha, Y^\beta, Y^\gamma\neq \Delta\}$, 
\begin{align*}
&\P(Y^\beta-Y^\gamma\in \cN(0)|\cH_\alpha \vee \cH_\gamma)\\
&=\P\Big(Y^\beta-Y^\sigma-W^{\beta|(k+1)}+(Y^\sigma+W^{\beta|(k+1)}-Y^\gamma)\in \cN(0)\Big|\cH_\alpha \vee \cH_\gamma\Big) \leq \frac{C}{(n-k)^{d/2}}.
\end{align*}
Also similar to \eqref{4e1.23}, we obtain
\begin{align*}
\P(Y^\alpha=Y^\gamma) \leq \frac{1}{V(R)}  \frac{C}{(1+l+m)^{d/2}}.
\end{align*}
So \eqref{9e1.24} becomes
\begin{align*}
\E(J_1^{(ii)})\leq& \sum_{n=\tau_R}^{[N_R]}   \sum_{k=n-\tau_R}^{n-1} \sum_{j=0}^{n-k} \sum_{m=0}^{n-k-j} \sum_{l=0}^{n-k-j}\sum_{\substack{\sigma: |\sigma|=k}} \sum_{\substack{\beta: |\beta|=n, \\ \beta\geq \sigma}} \sum_{\substack{\delta: |\delta|=k+j,\\ \delta \geq \sigma}} \sum_{\substack{\alpha: |\alpha|=k+j+m,\\ \alpha\geq \delta}} \sum_{\substack{\gamma: |\gamma|=k+j+l,\\ \gamma\geq \delta}}   \\
&\quad \P(Y^\alpha, Y^\beta, Y^\gamma\neq \Delta) \frac{C}{(n-k)^{d/2}} \frac{1}{V(R)} \frac{C}{(1+l+m)^{d/2}}\\
=&\sum_{n=\tau_R}^{[N_R]}   \sum_{k=n-\tau_R}^{n-1} \sum_{j=0}^{n-k} \sum_{m=0}^{n-k-j} \sum_{l=0}^{n-k-j} (V(R)p(R))^{k}  (V(R)p(R))^{n-k}  (V(R)p(R))^{j}    \\
&\quad (V(R)p(R))^{l}   (V(R)p(R))^{m}  \frac{C}{(n-k)^{d/2}} \frac{1}{V(R)} \frac{C}{(1+l+m)^{d/2}}.
 \end{align*} 
Use $k+(n-k)+j+m+l\leq 3n\leq 3[N_R]$ and $V(R)p(R)\leq e^{\theta/N_R}$ to get the above is at most
 \begin{align*}
C e^{3\theta }\frac{1}{V(R)}  \sum_{n=\tau_R}^{[N_R]}   \sum_{k=n-\tau_R}^{n-1} \sum_{j=0}^{n-k} \sum_{m=0}^{n-k-j} \sum_{l=0}^{n-k-j}  \frac{1}{(n-k)^{d/2}}   \frac{1}{(1+l+m)^{d/2}}.
 \end{align*} 
The sum of $l$ is bounded by $C/(1+m)^{d/2-1}$ and then the sum of $m$ gives at most $CI(N_R)$. Next, the sum of $j$ gives $n-k+1\leq 2(n-k)$ and we are left with
\begin{align*}
Ce^{3\theta } \frac{1}{V(R)}  I(N_R) \sum_{n=\tau_R}^{[N_R]}   \sum_{k=n-\tau_R}^{n-1}     \frac{1}{(n-k)^{d/2-1}}.
 \end{align*} 
 The sum of $k$ above is bounded by $I(N_R)$ and the sum of $n$ gives at most $N_R$. Combine the above to see
 \begin{align}\label{4e4.56}
\E(J_1^{(ii)})\leq&  C \frac{1}{V(R)} I(N_R)^2  N_R\leq CI(N_R),
 \end{align} 
  where the last inequality uses \eqref{4e4.55}. 
  
  We are ready to finish the proof of Lemma \ref{4l2.2}.
  
 \begin{proof}[Proof of Lemma \ref{4l2.2}]
Apply \eqref{4e4.58}, \eqref{4e4.57}, \eqref{4e4.56} to get
 \begin{align*}
\E(|K_R^1-K_R^2|) \leq&  \frac{1}{V(R)}  [\E(J_1^{(i)})+\E(J_1^{(ii)})] 
\leq \frac{1}{V(R)} \Big[C\tau_R+CI(N_R)\Big].
\end{align*}
 When $d\geq 5$, we have $I(N_R)\leq C$ and $\tau_R/R^d \to 0$ by \eqref{5e2.10}, so
 \begin{align*}
\E(|K_R^1-K_R^2|) \leq    C\frac{\tau_R}{R^d}+C \frac{1}{V(R)} \to 0.
 \end{align*} 
 When $d=4$, we get $I(N_R)\leq C \log R$ and $\tau_R=N_R/\log N_R \leq R^4/(\log R)^2$, so
  \begin{align*}
\E(|K_R^1-K_R^2|) \leq    C\frac{1}{(\log R)^2}+  C \frac{\log R}{R^4} \to 0.
 \end{align*} 
 The proof is now complete.
  \end{proof}

\section{Proof of Lemma \ref{4l2.3}} \label{4s6}

The last section is devoted to the proof of Lemma \ref{4l2.3}. Recall $K_R^{2}$ from \eqref{4e3.60} and $K_R^3$ from \eqref{4e5.60} to see
\begin{align} \label{9e8.01}
\E( \vert K_R^{2}-K_R^{3}\vert) &\leq    \frac{1}{V(R)} \E\Big(\sum_{n=\tau_R}^{[N_R]}\sum_{|\beta|=n}  \sum_{\gamma: |\gamma|\leq |\beta|}     1_{Y^\beta-Y^\gamma \in \cN(0)}  1_{|\gamma \wedge \beta|> |\beta|-\tau_R}\nn\\
&\times \Big[1_{\zeta_\beta^1> |\beta|-\tau_R} 1_{\zeta_{\gamma}^1> |\beta|-\tau_R}-1_{\zeta_\beta^1> |\beta|} 1_{\zeta_\gamma^{1}> |\gamma|}\Big]\Big).
\end{align}
Since $|\gamma|\leq |\beta|$, we get $1_{\zeta_{\gamma}^1> |\beta|-\tau_R}\leq 1_{\zeta_{\gamma}^1> |\gamma|-\tau_R}$. Bound the above square bracket by 
\begin{align*}
 1_{\{  |\beta|-\tau_R < \zeta_\beta^1\leq |\beta|\}} +1_{\{ |\gamma|-\tau_R< \zeta_\gamma^1 \leq |\gamma|\}}.
\end{align*}
Next, to get symmetry between $\beta, \gamma$, we let $\alpha=\beta\wedge \gamma$. Notice that in the above sum, $\alpha, \beta, \gamma$ satisfy $\tau_R\leq |\beta|\leq [N_R]$, $|\gamma|\leq |\beta|$ and $|\alpha| \geq |\beta|-\tau_R$. One can easily deduce that $0\leq |\alpha|\leq  [N_R]$, $\beta\geq \alpha$, $\gamma \geq \alpha$, $|\beta|\leq |\alpha|+\tau_R$ and $|\gamma|\leq |\alpha|+\tau_R$. Hence we may bound \eqref{9e8.01} by
\begin{align} \label{4e5.72}
\E( \vert K_R^{2}-K_R^{3}\vert) &\leq    \frac{1}{V(R)} \E\Big(\sum_{\substack{\alpha: 0\leq |\alpha|\leq [N_R]}} \sum_{\substack{\beta: \beta\geq \alpha,\\ |\beta|\leq |\alpha|+\tau_R}}\sum_{\substack{\gamma: \gamma\geq \alpha, \\|\gamma|\leq |\alpha|+\tau_R}}    1_{Y^\beta-Y^\gamma \in \cN(0)}   \nn\\
&\times \Big[1_{\{  |\beta|-\tau_R < \zeta_\beta^1\leq |\beta|\}} +1_{\{ |\gamma|-\tau_R< \zeta_\gamma^1 \leq |\gamma|\}}\Big]\Big).
\end{align}
Set
\begin{align} \label{4e5.71}
 I_0(R):=\frac{1}{V(R)} \E\Big(\sum_{\substack{\alpha: 0\leq |\alpha|\leq [N_R]}} \sum_{\substack{\beta: \beta\geq \alpha,\\ |\beta|\leq |\alpha|+\tau_R}}\sum_{\substack{\gamma: \gamma\geq \alpha, \\|\gamma|\leq |\alpha|+\tau_R}}    1_{Y^\beta-Y^\gamma \in \cN(0)}   1_{\{  |\beta|-\tau_R < \zeta_\beta^1\leq |\beta|\}}  \Big).
 \end{align}
 By symmetry between $\beta$ and $\gamma$, one can check that \eqref{4e5.72} implies
 \begin{align*} 
\E( \vert K_R^{2}-K_R^{3}\vert) &\leq    2 I_0(R).
\end{align*}
It suffices to show that  $I_0^R \to 0$ as $R\to \infty$. \\

Recall the definition of $\zeta_\beta^1$ from \eqref{5e11.4}.
In order that $|\beta|-\tau_R < \zeta_\beta^1\leq |\beta|$ occurs on the event $\{Y^\beta \neq \Delta\}$, there has to be some $(|\beta|-\tau_R)^+ < i\leq \vert \beta\vert $ so that 
\begin{align*} 
 Y^{\beta\vert i}  \in S\Big(\sum_{k=0}^{i-1} \widetilde{Z}_k\Big),
\end{align*}
which means that at time $i$, the particle $\beta\vert i$ is sent to a place that has been visited by some particle $\delta$ before time $i$. Hence we may bound $1_{\{  |\beta|-\tau_R < \zeta_\beta^1\leq |\beta|\}}$ by
\begin{align*} 
&\sum_{i=(|\beta|-\tau_R)^++1}^{\vert \beta\vert}   \sum_{\delta} 1_{|\delta|\leq i-1} 1_{Y^{\beta\vert i}=Y^\delta}.
\end{align*}
Apply the above in \eqref{4e5.71} to see
\begin{align*}  
I_0^R\leq \frac{1}{V(R)} \E\Big(\sum_{\substack{\alpha: 0\leq |\alpha|\leq [N_R]}}& \sum_{\substack{\beta: \beta\geq \alpha,\\ |\beta|\leq |\alpha|+\tau_R}}\sum_{\substack{\gamma: \gamma\geq \alpha, \\|\gamma|\leq |\alpha|+\tau_R}}   1_{Y^\beta-Y^\gamma \in \cN(0)} \nn\\
&\sum_{i=(|\beta|-\tau_R)^++1}^{\vert \beta\vert}    \sum_{\delta} 1_{|\delta|\leq i-1} 1_{Y^{\beta\vert i}=Y^\delta}\Big).
\end{align*}
Let $I_1^{R}$ and $I_2^{R}$ be respectively the sum of $i\leq |\alpha|$ and $i>|\alpha|$ on the right-hand side term above, that is, we define
\begin{align} \label{5e4.3}
I_1^{R}:= \frac{1}{V(R)} \E\Big(\sum_{\substack{\alpha: 0\leq |\alpha|\leq [N_R]}} &\sum_{\substack{\beta: \beta\geq \alpha,\\ |\beta|\leq |\alpha|+\tau_R}}\sum_{\substack{\gamma: \gamma\geq \alpha, \\|\gamma|\leq |\alpha|+\tau_R}}   1_{Y^\beta-Y^\gamma \in \cN(0)} \nn\\
&\sum_{i=(|\beta|-\tau_R)^++1}^{\vert \alpha\vert}    \sum_{\delta} 1_{|\delta|\leq i-1} 1_{Y^{\beta\vert i}=Y^\delta}\Big)
\end{align}
and
\begin{align} \label{5e4.5}
I_2^{R}:= \frac{1}{V(R)} \E\Big(\sum_{\substack{\alpha: 0\leq |\alpha|\leq [N_R]}} &\sum_{\substack{\beta: \beta\geq \alpha,\\ |\beta|\leq |\alpha|+\tau_R}}\sum_{\substack{\gamma: \gamma\geq \alpha, \\|\gamma|\leq |\alpha|+\tau_R}}   1_{Y^\beta-Y^\gamma \in \cN(0)}\nn\\
& \sum_{i=|\alpha|+1}^{\vert \beta\vert}    \sum_{\delta} 1_{|\delta|\leq i-1} 1_{Y^{\beta\vert i}=Y^\delta}\Big).
\end{align}
Then $I_0^R\leq I_1^R+I_2^R$ and it suffices to show that 
\begin{align*} 
\lim_{R\to \infty} I_1^{R}=0 \quad \text{ and } \quad\lim_{R\to \infty} I_2^{R}=0.
\end{align*}

\subsection{Convergence of $I_1^R$}

To calculate $I_1^R$, we set $|\alpha|=k$ for some $0\leq k\leq [N_R]$. Let $|\beta|=k+l$ and $|\gamma|=k+m$ for some $0\leq l,m\leq \tau_R$. Noticing that $i\leq |\alpha|$, we get $\beta|i=\alpha|i$. Next, since $|\delta|\leq i-1\leq |\alpha|-1$, we must have $\delta$ branches off the family tree of $\alpha,\beta,\gamma$ before time $|\alpha|=k$. For any $(k+l-\tau_R)^++1\leq i\leq k$,  set $|\delta\wedge \alpha|=j$ for some $0\leq j\leq i-1$ and let $|\delta|=j+n$ for some $0\leq n\leq i-1-j$. The above case is similar to $J_1^{(i)}$ as in Figure \ref{fig1} except for the notation.
Now  write the sum in $I_1^R$ as
\begin{align*} 
I_1^R= \frac{1}{V(R)} \E\Big( &\sum_{k=0}^{[N_R]}\sum_{l=0}^{\tau_R}\sum_{m=0}^{\tau_R}  \sum_{i=(k+l-\tau_R)^++1}^{k}   \sum_{j=0}^{i-1} \sum_{n=0}^{i-1-j} \sum_{\alpha:  |\alpha|=k}  \sum_{\substack{\beta: \beta\geq \alpha,\\ |\beta|=k+l}}\sum_{\substack{\gamma: \gamma\geq \alpha, \\|\gamma|=k+m}} \sum_{\substack{\delta: \delta \geq \alpha|j\\ |\alpha|=j+n}}   \nn\\
&1_{\{Y^\beta, Y^\gamma, Y^\delta \neq \Delta\} } 1_{Y^\beta-Y^\gamma \in \cN(0)}  1_{Y^{\alpha\vert i}=Y^\delta}\Big).
\end{align*}
 
Recall from \eqref{5e7.31} to see 
\[
Y^\beta-Y^\gamma=\sum_{t=k+1}^{k+m}  W^{\beta|t}+\sum_{s=k+1}^{k+l}  W^{\gamma|s}.
\]
The above is independent of $\cH_{\alpha}\vee \cH_\delta$. Hence by conditioning on $\cH_\alpha \vee\cH_\delta$, on the event $\{Y^\beta, Y^\gamma, Y^\delta \neq \Delta\}$ we get
\begin{align*} 
\P({Y^\beta-Y^\gamma \in \cN(0)}|\cH_\alpha \vee \cH_\delta)\leq \frac{C}{(1+l+m)^{d/2}},
\end{align*}
where the last inequality is by Lemma \ref{9l4.2}.

Next, notice that $\alpha\wedge \delta=\alpha|j$ with $|\delta|=j+n$. Use \eqref{5e7.31} again to get
\begin{align} \label{e1.33}
Y^{\alpha|i}-Y^\delta=\sum_{t=j+1}^{i} W^{\alpha|t}+\sum_{s=j+1}^{j+n} W^{\delta|s}.
\end{align}
Similar to \eqref{4e1.23}, we let $W^{\delta|(j+n)}=e_i$ for some $1\leq i\leq V(R)$ and use \eqref{e1.33} to  obtain
\begin{align} \label{e1.34}
   \P(Y^{\alpha|i}=Y^\delta)  & =  \frac{1}{V(R)} \P\Big(\sum_{t=j+1}^{i} W^{\alpha|t}+\sum_{s=j+1}^{j+n-1} W^{\delta|s}\in \cN(0)\Big)\nn \\
    &\leq \frac{1}{V(R)}\frac{C}{(1+(i-1-j)+n)^{d/2}}.
\end{align}

Now we are left with
\begin{align*} 
I_1^R\leq \frac{1}{V(R)} &\sum_{k=0}^{[N_R]} \sum_{l=0}^{\tau_R}\sum_{m=0}^{\tau_R} \sum_{i=(k+l-\tau_R)^++1}^{k}   \sum_{j=0}^{i-1} \sum_{n=0}^{i-1-j}\sum_{\alpha:  |\alpha|=k}\sum_{\substack{\delta: \delta \geq \alpha|j\\ |\alpha|=j+n}}\sum_{\substack{\beta: \beta\geq \alpha,\\ |\beta|=k+l}}\sum_{\substack{\gamma: \gamma\geq \alpha, \\|\gamma|=k+m}}   \nn\\
&   \P({Y^\beta, Y^\gamma, Y^\delta\neq \Delta}) \frac{C}{(1+l+m)^{d/2}}\frac{1}{V(R)}\frac{C}{(1+(i-1-j)+n)^{d/2}}.
\end{align*}
The probability of $\{Y^\beta, Y^\gamma, Y^\delta\neq \Delta\}$ gives $p(R)^k p(R)^l p(R)^m p(R)^n$ while the sum of $\alpha, \beta,\gamma,\delta$ gives $V(R)^k V(R)^l V(R)^m V(R)^n$. So the above is at most
\begin{align*} 
I_1^R\leq \frac{C}{V(R)^2} & \sum_{k=0}^{[N_R]}  \sum_{l=0}^{\tau_R}\sum_{m=0}^{\tau_R}  \sum_{i=(k+l-\tau_R)^++1}^{k}   \sum_{j=0}^{i-1} \sum_{n=0}^{i-1-j}\nn\\
&  (V(R)p(R))^{k+l+m+n} \frac{1}{(1+l+m)^{d/2}} \frac{1}{(1+(i-1-j)+n)^{d/2}} .
\end{align*}
Use $k+l+m+n\leq 4[N_R]$ and $V(R)p(R)\leq e^{\theta/N_R}$ to get $(V(R)p(R))^{k+l+m+n}\leq e^{4\theta}$. Next, the sum of $n$ gives
\begin{align*} 
\sum_{n=0}^{i-1-j}\frac{1}{(1+(i-1-j)+n)^{d/2}} \leq \frac{C}{(1+(i-1-j))^{d/2-1}}.
\end{align*}
The sum of $j$ gives
\begin{align*} 
  \sum_{j=0}^{i-1} \frac{C}{(1+(i-1-j))^{d/2-1}}\leq CI(i-1)\leq CI(N_R).
\end{align*}
The sum of $i$ is bounded by $\tau_R-l\leq \tau_R$. The sum of $m$ gives at most $C/(1+l)^{d/2-1}$ and the sum of $l$ gives $CI(\tau_R)\leq CI(N_R)$. Finally the sum of $k$ gives $[N_R]+1\leq 2N_R$. Combine the above to conclude
\begin{align} \label{4e5.10}
I_1^R\leq \frac{C}{V(R)^2} e^{4\theta}  \times CI(N_R) \times \tau_R \times CI(N_R) \times 2N_R\leq C\frac{N_R\tau_R}{V(R)^2} I(N_R)^2.
\end{align}
When $d\geq 5$, use $I(N_R)\leq C$ and $N_R=R^d$ to get $I_1^R\leq C\frac{\tau_R}{R^d} \to 0$ by \eqref{5e2.10}. When $d=4$, we get
\begin{align*} 
I_1^R\leq C\frac{\frac{R^4}{\log R} \frac{R^4/\log R}{\log R}}{(R^4)^2} (C\log R)^2\leq  \frac{C}{\log R} \to 0.
\end{align*}

\subsection{Convergence of $I_2^R$}
 
Turning to $I_2^R$, again we let $|\alpha|=k$ for some $0\leq k\leq [N_R]$. Let $|\beta|=k+l$ and $|\gamma|=k+m$ for some $0\leq l,m\leq \tau_R$. Then we write
\begin{align*} 
I_2^R= \frac{1}{V(R)} \E\Big(\sum_{k=0}^{[N_R]} \sum_{l=0}^{\tau_R} \sum_{m=0}^{\tau_R} \sum_{i=k+1}^{k+l} \sum_{\alpha: |\alpha|=k}\sum_{\substack{\beta: \beta\geq \alpha,\\ |\beta|=k+l}}\sum_{\substack{\gamma: \gamma\geq \alpha, \\|\gamma|=k+m}}   1_{Y^\beta-Y^\gamma \in \cN(0)}    \sum_{\delta} 1_{|\delta|\leq i-1} 1_{Y^{\beta\vert i}=Y^\delta}\Big).
\end{align*}

Since $|\delta|\leq i-1<|\beta|$, there are two cases for the generation when $\delta$ branches off the family tree of $\beta,\gamma$: 
\begin{align*} 
\text{(1)}  \ \vert \gamma \wedge \delta\vert \leq \vert \beta \wedge \delta\vert ; \quad \text{(2)}  \   \vert \gamma \wedge \delta\vert >\vert \beta \wedge \delta\vert .
\end{align*}
Let $I_2^{(1, R)}$ (resp. $I_2^{(2, R)}$) denote the contribution to $I_2^R$ from case (1) (resp. case (2)). Roughly speaking, case (1) gives that $\delta$ branches off the family tree of $\beta,\gamma$ through the $\beta$ line while case (2) is through the $\gamma$ line after $\alpha=\beta \wedge \gamma$. See Figure \ref{fig2} below.  \\ 

\begin{figure}[ht]
  \begin{center}
    \includegraphics[width=0.75 \textwidth]{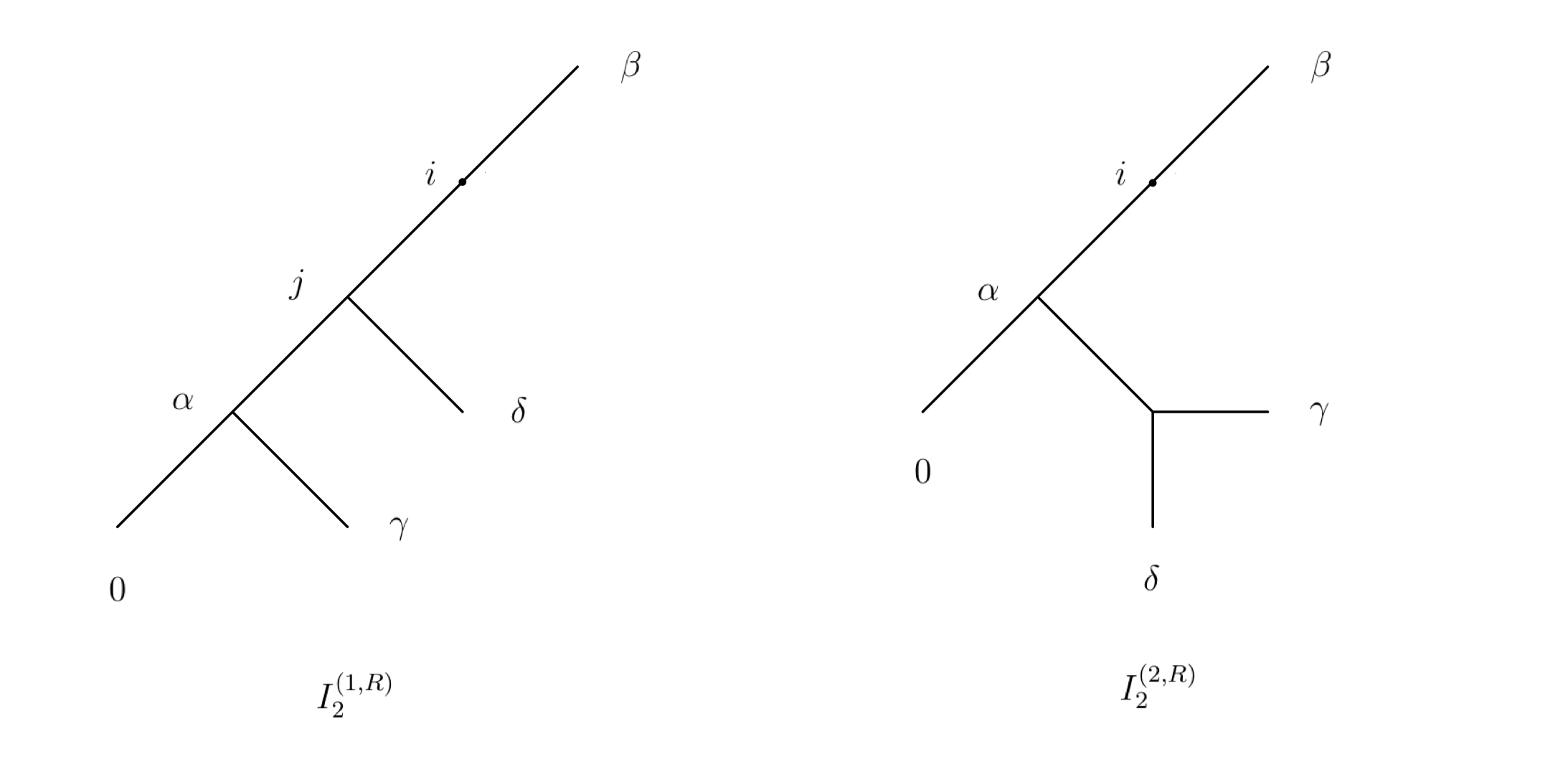}
    \caption[Branching Particle System]{\label{fig2}   Two cases for $I_2^R$.
      }
  \end{center}
\end{figure}

\no {\bf (1) Case\ for\ $I_2^{(1,R)}$}. Since $|\delta|\leq i-1$, we let $\beta \wedge \delta=\beta|j$ for some $0\leq j\leq i-1$. Set $|\delta|=j+n$ for some $0\leq n\leq i-1-j$. Now write the sum in $I_2^{(1,R)}$ as
\begin{align*} 
I_2^{(1,R)}= \frac{1}{V(R)} \E\Big(&\sum_{k=0}^{[N_R]} \sum_{l=0}^{\tau_R} \sum_{m=0}^{\tau_R} \sum_{i=k+1}^{k+l}   \sum_{j=0}^{i-1} \sum_{n=0}^{i-1-j} \sum_{\alpha: |\alpha|=k}\sum_{\substack{\beta: \beta\geq \alpha,\\ |\beta|=k+l}}\sum_{\substack{\gamma: \gamma\geq \alpha, \\|\gamma|=k+m}} \sum_{\substack{\delta: \delta\geq \beta|j, \\|\delta|=j+n} } \nn\\
&1_{\{Y^\beta, Y^\gamma, Y^\delta \neq \Delta\} } 1_{Y^\beta-Y^\gamma \in \cN(0)}  1_{Y^{\beta\vert i}=Y^\delta}\Big).
\end{align*}
Recall from \eqref{5e7.31} to see 
 \begin{align} \label{e4.11}
(Y^\beta-Y^{\beta|i})-(Y^\gamma-Y^{\gamma|(k+1)})=\sum_{t=i+1}^{k+l}  W^{\beta|t}+\sum_{s=k+2}^{k+m}  W^{\gamma|s},
\end{align}
The above are independent of $\cH_{\beta|i}\vee \cH_\delta$. Hence by conditioning on $\cH_{\beta|i}\vee \cH_\delta$, we use \eqref{e4.11} to get on the event $\{Y^\beta, Y^\gamma, Y^\delta \neq \Delta\}$,
\begin{align*} 
\P({Y^\beta-Y^\gamma \in \cN(0)}|\cH_{\beta|i} \vee \cH_\delta)& \leq \sup_x\P\Big({\sum_{t=i+1}^{k+l}  W^{\beta|t}+\sum_{s=k+2}^{k+m}  W^{\gamma|s}+x \in \cN(0)}\Big) \nn\\
&\leq \frac{C}{(1+(k+l-i)+m)^{d/2}}.
\end{align*}
where the last inequality is by Lemma \ref{9l4.2}.

Next, notice that $\beta\wedge \delta=\beta|j$ with $|\delta|=j+n$. Use \eqref{5e7.31} again to get
 \begin{align} \label{e4.14}
Y^{\beta|i}-Y^\delta=\sum_{t=j+1}^{i} W^{\beta|t}+\sum_{s=j+1}^{j+n} W^{\delta|s}.
\end{align}
Similar to \eqref{e1.34}, we use \eqref{e4.14} to obtain
 \begin{align*} 
    \P(Y^{\beta|i}=Y^\delta)& = \frac{1}{V(R)} \P\Big(\sum_{t=j+1}^{i} W^{\beta|t}+\sum_{s=j+1}^{j+n} W^{\delta|s}\in \cN(0)\Big) \\
    &\leq\frac{1}{V(R)}\frac{C}{(1+(i-1-j)+n)^{d/2}}.
\end{align*}
 
Now we are left with
\begin{align*} 
I_2^{(1,R)}&\leq \frac{1}{V(R)} \sum_{k=0}^{[N_R]} \sum_{l=0}^{\tau_R}\sum_{m=0}^{\tau_R}  \sum_{i=k+1}^{k+l}   \sum_{j=0}^{i-1} \sum_{n=0}^{i-1-j} \sum_{\alpha:  |\alpha|=k}\sum_{\substack{\beta: \beta\geq \alpha,\\ |\beta|=k+l}}\sum_{\substack{\gamma: \gamma\geq \alpha, \\|\gamma|=k+m}} \sum_{\substack{\delta: \delta\geq \beta|j, \\|\delta|=j+n} }   \\
&\P({Y^\beta, Y^\gamma, Y^\delta\neq \Delta})\frac{C}{(1+(l+k-i)+m)^{d/2}}\frac{1}{V(R)}\frac{C}{(1+(i-1-j)+n)^{d/2}}.\nn
\end{align*}
The probability $\P({Y^\beta, Y^\gamma, Y^\delta\neq \Delta})$ is bounded by $p(R)^k p(R)^l p(R)^m p(R)^n$ while the sum of $\alpha, \beta,\gamma,\delta$ gives $V(R)^k V(R)^l V(R)^m V(R)^n$. So the above is at most
\begin{align*} 
 \frac{Ce^{4\theta}}{V(R)^2} & \sum_{k=0}^{[N_R]}  \sum_{l=0}^{\tau_R}\sum_{m=0}^{\tau_R}   \sum_{i=k+1}^{k+l}   \sum_{j=0}^{i-1} \sum_{n=0}^{i-1-j}     \frac{1}{(1+(l+k-i)+m)^{d/2}}  \frac{1}{(1+(i-1-j)+n)^{d/2}}.
\end{align*}
where we have used $(V(R)p(R))^{k+l+m+n}\leq e^{4\theta}$ as before. The sum of $n$ gives $C/(1+(i-1-j))^{d/2-1}$ and the sum of $j$ gives
\begin{align*} 
  \sum_{j=0}^{i-1} \frac{C}{(1+(i-1-j))^{d/2-1}}\leq CI(i-1)\leq CI(N_R).
\end{align*}
The sum of $i$ is equal to
\begin{align*} 
  \sum_{i=k+1}^{k+l} \frac{1}{(1+(l+k-i)+m)^{d/2}} \leq \frac{C}{(1+m)^{d/2-1}}.
\end{align*}
Then the sum of $m$ is at most $CI(\tau_R)\leq CI(N_R)$. The sum of $l$ gives $1+\tau_R\leq 2\tau_R$ and the sum of $k$ is equal to $[N_R]+1\leq 2N_R$. Combine the above to see
\begin{align*} 
I_2^{(1,R)}\leq \frac{Ce^{4\theta}}{V(R)^2}  \times CI(N_R) \times CI(N_R) \times 2\tau_R\times 2N_R\leq C\frac{N_R\tau_R}{V(R)^2} I(N_R)^2.
\end{align*}
The right-hand side above is identical to that in \eqref{4e5.10} and so $I_2^{(1,R)}\to 0$ as $R\to \infty$.\\

\no {\bf (2) Case\ for\ $I_2^{(2,R)}$}. Now that $|\delta\wedge \gamma|>|\delta\wedge \beta|$, we must have $\delta$ branches off the family tree of $\beta, \gamma$ from $\gamma$ after $\gamma \wedge \beta$. That is, we let $\delta\wedge \gamma=\gamma|(k+j)$ for some $1\leq j\leq m\wedge (i-1-k)$ and $|\delta|=k+j+n$ for some $0\leq n\leq i-1-k-j$. See Figure \ref{fig2} for illustration. Now  write the sum in $I_2^{(2,R)}$ as
\begin{align*} 
I_2^{(2,R)}= \frac{1}{V(R)} \E\Big(&\sum_{k=0}^{[N_R]}\sum_{l=0}^{\tau_R} \sum_{m=0}^{\tau_R} \sum_{i=k+1}^{k+l}   \sum_{j=1}^{m\wedge (i-1-k)} \sum_{n=0}^{i-1-k-j}  \sum_{\alpha: |\alpha|=k} \sum_{\substack{\beta: \beta\geq \alpha,\\ |\beta|=k+l}}\sum_{\substack{\gamma: \gamma\geq \alpha, \\|\gamma|=k+m}}    \sum_{\substack{\delta: \delta\geq \gamma|(k+j), \\|\delta|=k+j+n} } \nn\\
&1_{\{Y^\beta, Y^\gamma, Y^\delta \neq \Delta\} } 1_{Y^\beta-Y^\gamma \in \cN(0)}  1_{Y^{\beta\vert i}=Y^\delta}\Big).
 \end{align*}

 Recall from \eqref{5e7.31} to see 
\[
Y^\beta-Y^{\beta|i}-(Y^\gamma-Y^{\gamma|(k+j+1)})=\sum_{t=i+1}^{k+l}  W^{\beta|t}+\sum_{s=k+j+2}^{k+m}  W^{\gamma|s},
\]
The above are independent of $\cH_{\beta|i}\vee \cH_\delta$. Hence by conditioning on $\cH_{\beta|i} \vee\cH_\delta$, on the event $\{Y^\beta, Y^\gamma, Y^\delta \neq \Delta\}$ we get
\begin{align*} 
\P({Y^\beta-Y^\gamma \in \cN(0)}|\cH_{\beta|i} \vee \cH_\delta)& \leq \sup_x\P\Big({\sum_{t=i+1}^{k+l}  W^{\beta|t}+\sum_{s=k+j+2}^{k+m}  W^{\gamma|s}+x \in \cN(0)}\Big) \nn\\
&\leq \frac{C}{(1+(l+k-i)+(m-j))^{d/2}}.
\end{align*}
where the last inequality is by Lemma \ref{9l4.2}.

Next, notice that $\beta\wedge \delta=\beta|j$ with $|\delta|=j+n$. Use \eqref{5e7.31} again to get
\begin{align} \label{e2.46}
Y^{\beta|i}-Y^\delta=\sum_{t=k+1}^{i} W^{\beta|t}+\sum_{s=k+1}^{k+j+n} W^{\delta|s}.
\end{align}
Similar to \eqref{e1.34}, we use \eqref{e2.46} to obtain
 \begin{align*} 
    &\P(Y^{\beta|i}=Y^\delta)= \frac{1}{V(R)} \P\Big(\sum_{t=k+1}^{i} W^{\beta|t}+\sum_{s=k+1}^{k+j+n} W^{\delta|s}\in \cN(0)\Big) \\
    &\leq\frac{1}{V(R)}\frac{C}{(1+(i-1-k)+j+n)^{d/2}}\leq \frac{1}{V(R)}\frac{C}{(1+j+n)^{d/2}}.
\end{align*}

Now we are left with
\begin{align*} 
I_2^{(2,R)}\leq &\frac{1}{V(R)} \sum_{k=0}^{[N_R]}\sum_{l=0}^{\tau_R} \sum_{m=0}^{\tau_R} \sum_{i=k+1}^{k+l}   \sum_{j=1}^{m} \sum_{n=0}^{i-1-k-j}  \sum_{\alpha: |\alpha|=k} \sum_{\substack{\beta: \beta\geq \alpha,\\ |\beta|=k+l}}\sum_{\substack{\gamma: \gamma\geq \alpha, \\|\gamma|=k+m}}    \sum_{\substack{\delta: \delta\geq \gamma|(k+j), \\|\delta|=k+j+n} } \nn\\
&\P({Y^\beta, Y^\gamma, Y^\delta\neq \Delta}) \frac{C}{(1+(l+k-i)+(m-j))^{d/2}} \frac{1}{V(R)}\frac{C}{(1+j+n)^{d/2}},
\end{align*}
where we have replaced $j\leq m\wedge (i-1-k)$ by $j\leq m$.
The probability $\P({Y^\beta, Y^\gamma, Y^\delta\neq \Delta}) $ is bounded by $p(R)^k p(R)^l p(R)^m p(R)^n$ while the sum of $\alpha, \beta,\gamma,\delta$ gives $V(R)^k V(R)^l V(R)^m V(R)^n$. So the above is at most
\begin{align*} 
I_2^{(2,R)}\leq \frac{Ce^{4\theta}}{V(R)^2} & \sum_{k=0}^{[N_R]}  \sum_{l=0}^{\tau_R}\sum_{m=0}^{\tau_R}   \sum_{i=k+1}^{k+l}   \sum_{j=1}^{m } \sum_{n=0}^{i-1-k-j} \nn\\
& \frac{1}{(1+(l+k-i)+(m-j))^{d/2}} \frac{1}{(1+j+n)^{d/2}}.
\end{align*}
where we have used $(V(R)p(R))^{k+l+m+n}\leq e^{4\theta}$ as before. The sum of $n$ gives $C/(1+j)^{d/2-1}$ and the sum of $i$ gives
\begin{align*} 
  \sum_{i=k+1}^{k+l} \frac{1}{(1+(l+k-i)^++(m-j))^{d/2}} \leq \frac{C}{(1+(m-j))^{d/2-1}}.
\end{align*}
We are arriving at
\begin{align*} 
I_2^{(2,R)}\leq \frac{Ce^{4\theta}}{V(R)^2} & \sum_{k=0}^{[N_R]}  \sum_{l=0}^{\tau_R}\sum_{m=0}^{\tau_R}   \sum_{j=1}^{m }   \frac{1}{(1+(m-j))^{d/2-1}} \frac{1}{(1+j)^{d/2-1}}.
\end{align*}
Interchange the sum of $m,j$ gives
\begin{align*} 
 &\sum_{j=1}^{\tau_R} \frac{1}{(1+j)^{d/2-1}} \sum_{m=j}^{\tau_R}   \frac{1}{(1+(m-j))^{d/2-1}}\nn\\
 &\leq  \sum_{j=1}^{\tau_R} \frac{1}{(1+j)^{d/2-1}}  I(\tau_R)\leq I(\tau_R)^2\leq I(N_R)^2.
\end{align*}
Finally, we get
\begin{align*} 
I_2^{(2,R)}& \leq \frac{Ce^{4\theta}}{V(R)^2}   \sum_{k=0}^{[N_R]}  \sum_{l=0}^{\tau_R} I(N_R)^2 \leq C\frac{N_R\tau_R}{V(R)^2} I(N_R)^2.
\end{align*}
The right-hand side above is identical to that in \eqref{4e5.10} and so $I_2^{(2,R)}\to 0$ as $R\to \infty$.\\

\bibliographystyle{plain}
\def\cprime{$'$}

\end{document}